\DeclareMathAlphabet\EuRoman{U}{eur}{m}{n}
\SetMathAlphabet\EuRoman{bold}{U}{eur}{b}{n}
\crefname{assumption}{Assumption}{Assumptions}
\crefname{claim}{Claim}{Claims}
\let\reftagform@=\tagform@
\def\tagform@#1{\maketag@@@{\ignorespaces\textcolor{gray}{(#1)}\unskip\@@italiccorr}}
\renewcommand{\eqref}[1]{\textup{\reftagform@{\ref{#1}}}}
\definecolor{WowColor}{rgb}{.75,0,.75}
\definecolor{SubtleColor}{rgb}{0,0,.50}
\newcounter{margincounter}
\declaretheorem[style=plain,numberwithin=section,name=Theorem]{theorem}
\declaretheorem[style=plain,sibling=theorem,name=Lemma]{lemma}
\declaretheorem[style=plain,sibling=theorem,name=Claim]{claim}
\declaretheorem[style=definition,sibling=theorem,name=Definition]{definition}
\declaretheorem[style=definition,sibling=theorem,name=Example]{example}
\declaretheoremstyle[
    spaceabove=-6pt,
    spacebelow=6pt,
    headfont=\normalfont\bfseries,
    bodyfont = \normalfont,
    postheadspace=1em,
    qed=$\square$,
    headpunct={{}}]{myproofstyle}
\numberwithin{equation}{section}
\numberwithin{theorem}{section}
\def\[#1\]{\begin{align}#1\end{align}}
\def\*[#1\]{\begin{align*}#1\end{align*}}
\newcommand{\Nats}{\mathbb{N}}
\DeclareMathOperator*{\newlim}{\mathrm{lim}\vphantom{\mathrm{infsup}}}
\DeclareMathOperator*{\newmax}{\mathrm{max}\vphantom{\mathrm{infsup}}}
\DeclareMathOperator*{\newinf}{\mathrm{inf}\vphantom{\mathrm{infsup}}}
\DeclareMathOperator*{\newsup}{\mathrm{sup}\vphantom{\mathrm{infsup}}}
\renewcommand{\lim}{\newlim}
\renewcommand{\max}{\newmax}
\renewcommand{\inf}{\newinf}
\renewcommand{\sup}{\newsup}
\newcommand{\cF}{\mathcal F}
\newcommand{\cG}{\mathcal G}
\newcommand{\BorelSets}[1]{\mathcal{B}[#1]}
\newcommand{\NSE}[1]{{^{*}#1}}
\newcommand{\ST}{\mathsf{st}}
\newcommand{\PowerSet}{\mathscr{P}}
\newcommand{\cA}{\mathcal{A}}
\newtheorem{open problem}{Open Problem}
\newcommand{\Loeb}[1]{\overline{#1}}
\newcommand{\interior}[1]{%
  {\kern0pt#1}^{\mathrm{o}}%
}
\newcommand{\refproof}[1]{See \cref{#1} for \IfSubStr{#1}{,}{proofs}{a proof}. }
\newif\iflongform
\providecommand*{\toclevel@definition}{0}
\providecommand*{\toclevel@theorem}{0}
\providecommand*{\toclevel@lemma}{0}
\title[Loeb Extension and Loeb Equivalence II]
{
Loeb Extension and Loeb Equivalence II
}
\subjclass{28E05 (primary), 03H05}
\newcommand{\cM}{\mathcal{M}}
\newcommand{\cN}{\mathcal{N}}
\newcommand{\cH}{\mathcal{H}}
\newcommand{\cV}{\mathcal{V}}
\newcommand{\boundary}[1]{\partial #1}
\newcommand{\symdef}{\triangle}
\newcommand{\cU}{\mathcal{U}}
\newcommand{\mprod}{\otimes}
\newcommand{\smprod}{\otimes^{\sigma}}
\newcommand{\closure}[1]{\bar{#1}}
\newtheorem{question}{Question}
\newcommand{\cov}[2]{\mathrm{cov}_{#1}{(#2)}}
\newcommand{\inter}[2]{\mathrm{int}_{#1}{(#2)}}
\begin{document}

\author[H.~Duanmu]{Haosui Duanmu}
\address{Haosui Duanmu, University of California, Berkeley}

\author[D.~Schrittesser]{David Schrittesser}
\address{David Schrittesser, University of Toronto}

\author[W.~Weiss]{William Weiss}
\address{William Weiss, University of Toronto}

\maketitle

\begin{abstract}

The paper answers two open questions that were raised in \citet{loebsun}.
The first question asks, if we have two Loeb equivalent spaces $(\Omega, \cF, \mu)$ and $(\Omega, \cG, \nu)$, does there exist an internal probability measure $P$ defined on the internal algebra $\cH$ generated from $\cF\cup \cG$ such that $(\Omega, \cH, P)$ is Loeb equivalent to $(\Omega, \cF, \mu)$? 
The second open problem asks if the $\sigma$-product of two $\NSE{\sigma}$-additive probability spaces is Loeb equivalent to the product of the same two $\NSE{\sigma}$-additive probability spaces. 
Continuing the work in  \citep{ADSWloeb}, we give a confirmative answer to the first problem when the underlying internal probability spaces are hyperfinite,  a partial answer to the first problem for general internal probability spaces, and settle the second question negatively by giving a counter-example.  Finally, we show that the continuity sets in the $\sigma$-algebra of the $\sigma$-product space are also in the algebra of the product space.
\end{abstract}

%

Loeb measure spaces, measure spaces constructed using nonstandard analysis according to the method discovered by Loeb \cite{loeb75}, have over the years found applications of in many areas of mathematics: 
For instance, in probability theory (see \citep{anderson87}, \citep{nsweak}),  stochastic processes (see \citep{andersonisrael}, \citep{localtime}, \citep{Keisler87}), Markov processes (see \citep{Markovpaper}, \citep{anderson2018mixhit} etc), statistical decision theory (see \citep{nsbayes}, \citep{nscredible}), and mathematical economics (see \citep{emmons84}, \citep{strongcore}, \citep{secondwelfare}, \citep{indmatching}, \citep{anderson08},  \citep{duffie18}) to name but a few examples.
Although the Loeb measure construction has been well-studied, basic problems about the nature of Loeb measure spaces still remain. 
\citet{loebsun} posed four open problems concerning Loeb extension and Loeb equivalence between internal probability spaces.
In \citep{ADSWloeb}, we provide counter-examples to the first two open problems and a partial solution to the third open problem. 
In this paper, we give an affirmative to the third open problem when the underlying internal probability spaces are hyperfinite. 
We also provide a counter-example for the fourth open problem, hence showing that the completion of the $\sigma$-product space is generally not the same as the completion of the product space. 
However,  we show that continuity sets in the $\sigma$-algebra of $\sigma$-product space are also in the algebra of the product space.  

\medskip

Recall that given an internal probability space $(\Omega, \cF, \mu)$, its Loeb extension is the countably additive probability space $(\Omega, \Loeb{\cF}, \Loeb{\mu})$, where $\Loeb{\cF}$ is the collection of sets $B\subset \Omega$ such that, with $\ST$ denoting the standard part map, 
\[
\sup\{\ST(\mu(A)): B\supset A\in \cF\}=\inf\{\ST(\mu(C)): B\subset C\in \cF\},
\] 
and for every $B\in\Loeb{\cF}$, $\Loeb{\mu}(B)$ is defined to be the supremum (infinum) above. 

\medskip

\citet{loebsun} introduced the following rather straightforward notions of extension and equivalence for pairs of internal probability spaces. 

\begin{definition}\label{defloebext}
Let $\cM=(\Omega, \cF, \mu)$ and $\cN=(\Omega, \cG, \nu)$ be two internal probability spaces. 
We say $\cN$ \textbf{Loeb extends} $\cM$ if $\Loeb{\cG}\supset \Loeb{\cF}$ and $\Loeb{\nu}$ extends $\Loeb{\mu}$ as a function. 
We say $\cN$ is \textbf{Loeb equivalent to} $\cM$ if $\Loeb{\cF}=\Loeb{\cG}$ and $\Loeb{\nu}=\Loeb{\mu}$.  
\end{definition}

If $\cF\subset \cG$ and $\nu$ extends $\mu$ as a function, then it is clear that $\cN$ Loeb extends $\cM$. 
In \citep{loebsun}, Keisler and Sun discuss Loeb extensions and Loeb equivalence in absence of the assumption $\cF\subset \cG$, leaving four open problems to three of which we have provided complete or at least partial solutions in \citep{ADSWloeb}.
Our aim is to give complete answers to the third and fourth open problems, which we state below.

Following \citep{loebsun}, we say an internal probability space 
$(\Omega, \cF, \mu)$ is \textbf{hyperfinite} if $\cF$ is hyperfinite. 
Note that we do not require $\Omega$ to be hyperfinite. 

The following questions asked by \citet{loebsun} remain open: 

\begin{question}\label{loebeqq3}
Suppose $\cM=(\Omega, \cF, \mu)$ is Loeb equivalent to $\cN=(\Omega, \cG, \nu)$, and $\cH$ be the internal algebra generated by $\cF\cup \cG$.
Must there be an internal probability measure $P$ on $\cH$ such that $\cM$ is Loeb equivalent to $(\Omega, \cH, P)$? What if $\cM$ and $\cN$ are assumed to be hyperfinite? 
\end{question}
Let $\cM_1=(M_1, \cF_1, \mu_1)$ and $\cM_2=(M_2, \cF_2, \mu_2)$ be two internal probability spaces. 
Let $\cM_1\mprod \cM_2=(M_1\times M_2, \cF_1\mprod \cF_2, \mu_1\mprod \mu_2)$ be the internal product space, where $\cF_1\mprod \cF_2$ is the internal algebra of all hyperfinite unions of $A_1\times A_2\in \cF_1\times \cF_2$. 
When the internal probability spaces $\cM_1$ and $\cM_2$ are $\NSE{\sigma}$-additive, let $\cM_1\smprod \cM_2$ be the internal $\NSE{\sigma}$-additive probability space by the internal product $\cM_1\mprod \cM_2$. 
\citet{loebsun} ask the following question: 

\begin{question}\label{fthsun}
Let $\cM$ and $\cN$ be internal $\NSE{\sigma}$-additive probability space. Must the product $\cM\mprod \cN$ be Loeb equivalent to the $\sigma$-product $\cM\smprod \cN$?
\end{question}

If $\cM_1$ and $\cM_2$ are standard countably additive probability spaces, let 
\[
\cM_1\mprod \cM_2=(M_1\times M_2, \cF_1\mprod \cF_2, \mu_1\mprod \mu_2)
\]
be the product space, where $\cF_1\mprod \cF_2$ is the algebra generated by $A_1\times A_2\in \cF_1\times \cF_2$. Let 
\[
\cM_1\smprod \cM_2=(M_1\times M_2, \cF_1\smprod \cF_2, \mu_1\smprod \mu_2)
\]
be the $\sigma$-product space. That is, $\cM_1\smprod \cM_2$ is a $\sigma$-additive probability space, where $\cF_1\smprod \cF_2$ is the $\sigma$-algebra generated by all sets of the form $A_1\times A_2\in \cF_1\times \cF_2$. 
In the case that $\cM$ and $\cN$ are nonstandard extensions of standard spaces, \cref{fthsun} takes the following standard form. 

\begin{question}\label{stfthsun}
Let $\cM$ and $\cN$ be standard $\sigma$-additive probability spaces. 
Let $\cM^{c}$ denote the completion of $\cM$. 
Must $(\cM\mprod \cN)^{c}=(\cM\smprod \cN)^{c}$? 
\end{question}

\section{Answer to \cref{loebeqq3}}\label{secondsec}

We give an affirmative answer to \cref{loebeqq3} when both $\cM$ and $\cN$ are hyperfinite.  We start by quoting some results from our earlier paper \citep{ADSWloeb}.

\begin{theorem}[{\citep[][Thm.~3.4]{ADSWloeb}}]\label{unioneq}
Let $(\Omega, \cF, \mu)$ be a hyperfinite probability space and let $\cG$ be a hyperfinite algebra on $\Omega$. 
Let $\cH$ be the internal algebra generated by $\cF\cup \cG$. 
Then $(\Omega, \cH, P)$ is Loeb equivalent to $(\Omega, \cF, \mu)$ for some internal probability measure $P$ if and only if $\cH\subset \Loeb{\cF}$. 
\end{theorem}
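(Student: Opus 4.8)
I would first dispose of the forward implication, which is immediate: Loeb equivalence forces $\Loeb{\cH}=\Loeb{\cF}$, and any internal set $A\in\cH$ already lies in $\Loeb{\cH}$ (take $A$ itself as both the inner and the outer approximant in the definition of the Loeb $\sigma$-algebra), so $\cH\subseteq\Loeb{\cH}=\Loeb{\cF}$. The content is all in the converse. Assuming $\cH\subseteq\Loeb{\cF}$, the plan is to write down $P$ by hand and then verify Loeb equivalence in two halves. Since $\cF$ and $\cG$ are hyperfinite, so is $\cH$: its atoms are the nonempty sets $A\cap B$ with $A$ an atom of $\cF$ and $B$ an atom of $\cG$, and each atom of $\cF$ is a hyperfinite disjoint union of atoms of $\cH$. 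I would simply distribute the $\mu$-mass of each atom of $\cF$ among the atoms of $\cH$ it contains (uniformly, say); this is an internal prescription, so it defines an internal probability measure $P$ on $\cH$ with $P|_{\cF}=\mu$. This step uses only hyperfiniteness, not yet the hypothesis $\cH\subseteq\Loeb{\cF}$.

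For the first half, since $\cF\subseteq\cH$ and $P|_{\cF}=\mu$, the observation following \cref{defloebext} shows that $(\Omega,\cH,P)$ Loeb extends $(\Omega,\cF,\mu)$; in particular $\Loeb{\cF}\subseteq\Loeb{\cH}$ and $\Loeb{P}$ extends $\Loeb{\mu}$ as a function. For the second half I would fix $B\in\Loeb{\cH}$ and approximate it by members of $\cF$. If $A\in\cH$ and $A\subseteq B$, then $A\in\cH\subseteq\Loeb{\cF}$, so $\Loeb{\mu}(A)$ is defined and equals $\Loeb{P}(A)=\ST(P(A))$ (using that $\Loeb{P}$ extends $\Loeb{\mu}$ and $A\in\cH$); and by the definition of $\Loeb{\cF}$ there is, for each standard $\varepsilon>0$, some $A'\in\cF$ with $A'\subseteq A\subseteq B$ and $\ST(\mu(A'))>\ST(P(A))-\varepsilon$. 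Taking suprema over all such $A$,
\[
\sup\{\ST(\mu(A')):A'\in\cF,\ A'\subseteq B\}\ \geq\ \sup\{\ST(P(A)):A\in\cH,\ A\subseteq B\}\ =\ \Loeb{P}(B),
\]
and the dual argument, approximating $\cH$-supersets of $B$ from outside by members of $\cF$, gives
\[
\inf\{\ST(\mu(C')):C'\in\cF,\ C'\supseteq B\}\ \leq\ \inf\{\ST(P(C)):C\in\cH,\ C\supseteq B\}\ =\ \Loeb{P}(B).
\]
Since the inner measure of $B$ with respect to $\cF$ never exceeds its outer measure, both displays collapse to $\Loeb{P}(B)$; hence $B\in\Loeb{\cF}$ with $\Loeb{\mu}(B)=\Loeb{P}(B)$. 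Therefore $\Loeb{\cH}=\Loeb{\cF}$, and as $\Loeb{P}$ extends $\Loeb{\mu}$ over this common $\sigma$-algebra they coincide, so $(\Omega,\cH,P)$ is Loeb equivalent to $(\Omega,\cF,\mu)$.

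The step I expect to be the main obstacle is this last one: bootstrapping from membership in $\Loeb{\cF}$ of every \emph{internal} set of $\cH$ (the hypothesis) to membership in $\Loeb{\cF}$ of every set of the \emph{generated} $\sigma$-algebra $\Loeb{\cH}$. The argument above threads this through the two-sided internal approximation built into the definition of Loeb measurability, leaning on the already-established fact that $\Loeb{P}$ extends $\Loeb{\mu}$ to identify the approximating masses. It is worth flagging where hyperfiniteness is genuinely used: it is exactly what guarantees an internal probability measure on $\cH$ extending $\mu$ in the first place — there need be none in general, which is why the unrestricted case of \cref{loebeqq3} is more delicate.
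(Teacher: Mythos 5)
The paper does not actually prove \cref{unioneq} here --- it is quoted from the companion paper (Thm.~3.4 there) --- so there is no in-text proof to compare against. Your argument is correct and is the natural route: the forward direction is immediate as you say; uniformly redistributing each $\cF_0$-atom's mass over the $\cU$-atoms it contains is an internal prescription yielding an internal probability measure $P$ on $\cH$ with $P|_{\cF}=\mu$, whence $\Loeb{\cF}\subseteq\Loeb{\cH}$ with $\Loeb{P}$ extending $\Loeb{\mu}$; and your two-sided approximation step (replacing each $\cH$-approximant of $B\in\Loeb{\cH}$ by an $\cF$-approximant of nearly the same mass, which is exactly where $\cH\subseteq\Loeb{\cF}$ enters) correctly collapses the inner and outer $\cF$-measures of $B$ onto $\Loeb{P}(B)$, giving $\Loeb{\cH}=\Loeb{\cF}$ and $\Loeb{P}=\Loeb{\mu}$.
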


By \cref{unioneq}, when $\cM$ and $\cN$ are both hyperfinite, \cref{loebeqq3} is equivalent to the following question: 

\begin{question}\label{posloebquestion}
Suppose $\cM=(\Omega, \cF, \mu)$ and $\cN=(\Omega, \cG, \nu)$ are two hyperfinite probability spaces that are Loeb equivalent to each other. 
Let $\cH$ be the hyperfinite algebra generated by $\cF\cup \cG$. Is $\cH\subset \Loeb{\cF}$? 
\end{question}

As $\cM$ is hyperfinite, there exists an internal subset $\cF_0$ of $\subset$-minimal elements in $\cF\setminus\{\emptyset\}$ which generates $\cF$. 
That is, there exists an internal $\cF_0\subset \cF$ such that
\begin{enumerate}
\item $\cF_0$ is a hyperfinite partition of $\Omega$
\item For every $A\in \cF_0$, if there exists non-empty $E\in \cF$ such that $E\subset A$, then $E=A$.
\end{enumerate}
Clearly any $F\in \cF$ is a hyperfinite union of elements from $\cF_0$ (noting $\emptyset$ is the union of $\emptyset$, which is also a hyperfinite subset of $\cF$). Thus, $\cF_0$ generates the internal algebra $\cF$. 
Similarly, there exists an internal subset $\cG_0$ of $\cG$ such that
\begin{enumerate}
\item $\cG_0$ is a hyperfinite partition of $\Omega$
\item For every $A\in \cG_0$, if there exists non-empty $E\in \cG$ such that $E\subset A$, then $E=A$.
\end{enumerate}
As above, $\cG_0$ generates the internal algebra $\cG$. Let 
\[
\cU=\{A\cap B: A\in \cF_0, B\in \cG_0, A\cap B\neq \emptyset\}. 
\]
Then $\cU$ forms a $\NSE{}$partition of $\Omega$ and every element in $\cH$ can be written as a hyperfinite union of elements in $\cU$. 
As $\cM$ and $\cN$ are Loeb equivalent, it is easy to see that $\cU\subset \Loeb{\cF}$. However, it is not clear whether all hyperfinite (not finite) unions of elements in $\cU$ are elements of $\Loeb{\cF}$. Hence it is not straightforward to determine if $\cH\subset \Loeb{\cF}$. 

\medskip

Let us write
\[
\cF_0'=\{F\in \cF_0: \text{$F$ intersects at least two elements in $\cG_0$}\}.
\] 
The following improves \citep[][Thm.~4.1]{ADSWloeb}.
\begin{lemma}\label{immthm}
Suppose $\cM=(\Omega, \cF, \mu)$ and $\cN=(\Omega, \cG, \nu)$ are two hyperfinite probability spaces that are Loeb equivalent to each other. 
Then $\cH\subset \Loeb{\cF}$ if and only if $\mu(\bigcup \cF_0')\approx 0$. 
\end{lemma}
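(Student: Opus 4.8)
The plan is to analyze the situation in terms of the common refinement $\cU$ and to reduce the question "$\cH\subset\Loeb{\cF}$'' to a statement about the total mass carried by the "bad'' atoms, i.e.\ those $F\in\cF_0$ which are genuinely split by $\cG_0$. For the easy direction, suppose $\mu(\bigcup\cF_0')\not\approx 0$; I would exhibit an element of $\cH$ that is not in $\Loeb{\cF}$. The natural candidate is built as follows: for each $F\in\cF_0'$ pick (by internal choice) a nonempty proper "half'' $A_F=F\cap B_F$ with $B_F\in\cG_0$, and set $W=\bigcup_{F\in\cF_0'}A_F\in\cH$. Then $W$ meets every $F\in\cF_0'$ but contains none of them, so the largest $\cF$-set inside $W$ is $\emptyset$ and the smallest $\cF$-set containing $W$ is $\bigcup\cF_0'$; one must check that along the way there is a genuine gap, i.e.\ that $W$ cannot be inner/outer approximated in $\cF$ to within an infinitesimal. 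Here Loeb equivalence is used: since $A_F\in\Loeb{\cF}$ with $\Loeb{\mu}$-value equal to $\Loeb{\nu}(A_F)$, one sees the "spread'' of $W$ over the partition $\cF_0'$ is incompressible, and summing over the hyperfinitely many $F\in\cF_0'$ the inner and outer measures of $W$ differ by $\ST(\mu(\bigcup\cF_0'))\cdot(\text{something bounded away from }0)>0$. So $W\notin\Loeb{\cF}$, contradicting $\cH\subset\Loeb{\cF}$.

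For the main direction, assume $\mu(\bigcup\cF_0')\approx 0$ and let $H\in\cH$ be arbitrary; I want $H\in\Loeb{\cF}$. Write $\cF_0=\cF_0'\cup\cF_0''$ where $\cF_0''$ consists of the atoms of $\cF_0$ entirely contained in a single atom of $\cG_0$. Every such $F\in\cF_0''$ is itself a union of $\cU$-atoms that in fact equals a single $\cU$-atom contained in $F$, so on $\bigcup\cF_0''$ the algebra $\cH$ agrees with $\cF$ — more precisely, $H\cap\bigcup\cF_0''$ is a hyperfinite union of atoms from $\cF_0''$, hence lies in $\cF$ outright. On the remaining part, $H\cap\bigcup\cF_0'\subset\bigcup\cF_0'$, which has infinitesimal $\mu$-measure; thus $\emptyset\subset H\cap\bigcup\cF_0'\subset\bigcup\cF_0'$ exhibits $H\cap\bigcup\cF_0'$ as an element of $\Loeb{\cF}$ of Loeb measure $0$. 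Combining, $H = (H\cap\bigcup\cF_0'')\sqcup(H\cap\bigcup\cF_0')$ is a disjoint union of an $\cF$-set and a Loeb-null set, so $H\in\Loeb{\cF}$ with $\Loeb{\mu}(H)=\ST(\mu(H\cap\bigcup\cF_0''))$. Since $H\in\cH$ was arbitrary, $\cH\subset\Loeb{\cF}$.

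The routine bookkeeping — that $\cF_0,\cG_0$ are internal (hence $\cF_0'$ is internal and $\bigcup\cF_0'\in\cF$), that the decomposition $\cF_0=\cF_0'\sqcup\cF_0''$ is internal, and that a hyperfinite union of $\cF_0''$-atoms is in $\cF$ — all follows from internality and transfer, together with the partition properties of $\cF_0$ and $\cG_0$ already set up in the excerpt. The one place to be careful, and what I expect to be the main obstacle, is the easy direction: making the "incompressibility'' argument rigorous, i.e.\ producing from $\mu(\bigcup\cF_0')\not\approx 0$ a single set in $\cH$ whose inner and outer Loeb measures provably differ. The subtlety is that the obvious $W=\bigcup_{F\in\cF_0'}A_F$ need not have inner measure $0$ relative to $\cF$ a priori — some sub-collection of the $A_F$ might conceivably assemble into an $\cF$-set — so one has to argue that this cannot "eat up'' a non-infinitesimal portion. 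Here is where Loeb equivalence does the real work: any $\cF$-set $E\subset W$ is, up to a Loeb-null correction on $\bigcup\cF_0''$ (empty here) a union of $\cF_0'$-atoms, but $W$ contains no whole $\cF_0'$-atom, forcing $\Loeb{\mu}(E)=0$; dually $\Loeb{\mu}(C)\ge\Loeb{\mu}(\text{a half-}W\text{ complement})$ for any $\cF$-set $C\supset W$, and a symmetry/averaging argument over the two halves (choosing $A_F$ vs.\ $F\setminus A_F$) shows the outer measure is at least $\tfrac12\ST(\mu(\bigcup\cF_0'))>0$. I would organize this as a short self-contained sublemma so that the proof of the lemma proper reads cleanly.
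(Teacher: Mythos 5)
Your proposal is correct and follows essentially the same route as the paper: the forward direction splits $H$ according to whether its $\cF_0$-atoms lie in $\cF_0'$ (a set of infinitesimal $\mu$-measure, handled by completeness of the Loeb measure) or in $\cF_0\setminus\cF_0'$ (where $\cH$ and $\cF$ agree), and the reverse direction builds exactly the paper's witness $\bigcup_{F\in\cF_0'}(F\cap B_F)$, whose inner $\cF$-measure is $0$ and whose outer $\cF$-measure is $\ST(\mu(\bigcup \cF_0'))>0$ because every nonempty $\cF$-set is a union of whole $\cF_0$-atoms and the witness contains none while meeting each $F\in\cF_0'$. The only superfluous part is your worry in the easy direction: no averaging over the two ``halves'' and no appeal to Loeb equivalence is needed there, since the atomic structure of $\cF$ already pins the inner and outer measures exactly.
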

\begin{proof}
Suppose $\mu(\bigcup \cF_0')\approx 0$. This direction is proven in {\citep[][Thm.~4.1]{ADSWloeb}}. We include the proof for completeness. 
Let $\cF_1=\cF_0\setminus \cF_0'$. 
Pick $H\in \cH$ and, without loss of generality, assume that $H=\bigcup_{i=1}^{K}(A_i\cap B_i)$ where $A_i\in \cF$, $B_i\in \cG$ and $K\in \NSE{\Nats}$.
Let $\cV=\{A_i\cap B_i: i\leq K\}$. 
Then $H=H_1\cup H_2$ where $H_1=\bigcup \{(A_i\cap B_i)\in \cV: A_i\in \cF_0'\}$ and $H_2=\bigcup \{(A_i\cap B_i)\in \cV: A_i\in \cF_1\}$. 
Clearly, $H_1$ is a subset of $\bigcup \cF_0'$. 
As $\mu(\bigcup \cF_0')\approx 0$, by the completeness of Loeb measure, $H_1$ is Loeb measurable and $\Loeb{\mu}(H_1)=0$.
Note that, for every element $F\in \cF_1$, there exists an unique $G\in \cG_0$ such that $F\subset G$. 
Thus, for every $A_i\in \cF_1$, $A_i\cap B_i$ is either $A_i$ or $\emptyset$.
Thus, we know that $H_2\in \cF$.
Hence we conclude that $H\in \Loeb{\cF}$. 

Suppose $\Loeb{\mu}(\bigcup \cF_0')>0$. Let $\cF_0'=\{A_1, A_2, \dotsc, A_m\}$ for some $m\in \NSE{\Nats}$. 
There exists an internal sequence $(B_i)_{i\leq m}$ such that $B_i\in \cG_0$ for all $i\leq m$ and $\emptyset\neq A_i\cap B_i\subsetneq A_i$. 
Let $C=\bigcup_{i=1}^{m}(A_i\cap B_i)$. 
It is easy to see that $C\in \cH$. 
Note that
\[
\bigcap\{A\in \cF: A\supset C\}=\bigcup \cF'_0\  \text{and}\  \bigcup\{A\in \cF: A\subset C\}=\emptyset
\]
Hence $C$ is not $\Loeb{\cF}$-measurable, which implies that $\cH$ is not a subset of $\Loeb{\cF}$. 
\end{proof}

We now prove the main result of this section. 

\begin{theorem}\label{covintresult}
Suppose $\cM=(\Omega, \cF, \mu)$ and $\cN=(\Omega, \cG, \nu)$ are a pair of Loeb equivalent hyperfinite probability spaces.  
Then $\mu(\bigcup \cF_0')\approx 0$. 
\end{theorem}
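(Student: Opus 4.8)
The plan is to argue by contradiction. Suppose $\ST\!\big(\mu(\bigcup\cF_0')\big)>0$; I will produce a set lying in $\Loeb\cG$ but not in $\Loeb\cF$, contradicting the fact that $\cM$ and $\cN$ are Loeb equivalent (in fact only the equality $\Loeb\cF=\Loeb\cG$ will be used). Write $S=\bigcup\cF_0'$ and fix a standard $c>0$ with $\mu(S)>c$. The key point is to locate a \emph{single} set $A\in\cG$ that \emph{splits} the atoms of a non-negligible portion of $\cF_0'$ — here $A$ splitting $F$ means $\emptyset\neq A\cap F\subsetneq F$. One cannot in general ask a single $A\in\cG$ to split \emph{every} member of $\cF_0'$ (this is a set-splitting / ``Property~B'' obstruction for the hypergraph on $\cG_0$ whose edges are $\{G\in\cG_0: G\cap F\neq\emptyset\}$, $F\in\cF_0'$), but a probabilistic argument provides an $A$ splitting a sub-collection of $\mu$-measure at least $\tfrac12\mu(S)$.

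First I would carry out the probabilistic step internally. For $F\in\cF_0'$ let $k_F\ge 2$ be the internal number of atoms of $\cG_0$ meeting $F$, and equip the hyperfinite power set $\cP(\cG_0)$ with its internal uniform probability measure. For $\mathcal A\subseteq\cG_0$ the set $A_{\mathcal A}:=\bigcup\mathcal A$ lies in $\cG$ (as $\cG$, being an internal algebra containing $\cG_0$, is closed under hyperfinite unions), and $A_{\mathcal A}$ fails to split a fixed $F$ exactly when all $k_F$ atoms of $\cG_0$ meeting $F$ lie in $\mathcal A$ or all lie outside $\mathcal A$; since these memberships are independent fair coin flips, transferring this elementary computation gives that probability as $2^{1-k_F}\le\tfrac12$. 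Hence, letting $X(\mathcal A)=\sum\{\mu(F):F\in\cF_0',\ A_{\mathcal A}\text{ splits }F\}$ (an internal random variable), linearity of internal expectation yields $\EE[X]=\sum_{F\in\cF_0'}\mu(F)\big(1-2^{1-k_F}\big)\ge\tfrac12\mu(S)>\tfrac c2$, so some internal $\mathcal A_0$ satisfies $X(\mathcal A_0)>\tfrac c2$.

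Next I would assemble the witness. Put $A=\bigcup\mathcal A_0\in\cG$ and $S_0=\bigcup\{F\in\cF_0':A_{\mathcal A_0}\text{ splits }F\}\in\cF$ (the indexing collection is internal, being defined by an internal property), so $\mu(S_0)=X(\mathcal A_0)>\tfrac c2$; set $B=A\cap S_0$. On one hand $B\in\Loeb\cG$: indeed $A\in\cG\subseteq\Loeb\cG$, while $S_0\in\cF\subseteq\Loeb\cF=\Loeb\cG$ by Loeb equivalence, and $\Loeb\cG$ is an algebra. On the other hand $B\notin\Loeb\cF$: for each split $F$ we have $F\subseteq S_0$ and $\emptyset\neq A\cap F\subsetneq F$ with $A\cap F\subseteq B$, so $B$ meets every split $F$ but contains no $\cF$-atom; since the split atoms are precisely the $\cF$-atoms meeting $S_0\supseteq B$, the smallest member of $\cF$ containing $B$ is $S_0$ while the largest member of $\cF$ contained in $B$ is $\emptyset$. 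Thus the inner and outer $\cF$-approximations of $B$ equal $0$ and $\ST(\mu(S_0))\ge\tfrac c2>0$ respectively, so $B\notin\Loeb\cF$. Since $B\in\Loeb\cG$, this contradicts $\Loeb\cF=\Loeb\cG$.

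The one step requiring real insight is the choice of witness. Merely exhibiting \emph{some} element of $\cH$ that is not $\Loeb\cF$-measurable only reproves the ``if'' half of \cref{immthm} and gives no contradiction; one must instead engineer the witness so that its ``$\cG$-part'' comes from a single $A\in\cG$, which then automatically places $B=A\cap S_0$ in $\Loeb\cG$, and the averaging over $\cP(\cG_0)$ is exactly what delivers such an $A$ while circumventing the Property~B obstruction. The remaining ingredients — internality of $\cF_0'$ and of the collection of split atoms, closure of $\cG$ under hyperfinite unions, hyperfinite additivity of $\mu$, and the inner/outer-approximation bookkeeping — are routine.
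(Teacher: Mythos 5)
Your proof is correct, and it takes a genuinely different route from the paper's. The paper argues directly (not by contradiction) via a deterministic internal recursion: it peels off, one at a time, atoms $A_n^{*}\in\cF_0$ containing no $\cG_0$-atom, forming a decreasing internal sequence $F_n'$ with $F_0'=\bigcup\cF_0'$; internal induction forces termination at some hyperfinite stage $M$, the terminal piece $F_M'$ is shown null because a $\cG$-set with empty $\cF_0$-interior and full $\cF_0$-cover must be $\Loeb{\cF}$-measurable, and the peeled-off pieces $\cov{\cF_0}{\cov{\cG_0}{A_i^{*}}}$ are shown null using \emph{both} halves of Loeb equivalence, i.e.\ $\Loeb{\mu}=\Loeb{\nu}$ as well as $\Loeb{\cF}=\Loeb{\cG}$ (the $A_i^{*}$ have empty $\cG_0$-interior, hence $\Loeb{\nu}$-measure zero, hence $\Loeb{\mu}$-measure zero, and this propagates to their $\cG_0$-covers and then their $\cF_0$-covers). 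Your argument instead transfers the probabilistic method over the hyperfinite power set of $\cG_0$ to produce a single $A\in\cG$ splitting a subfamily of $\cF_0'$ carrying at least half the mass, and then reads off the inner/outer $\cF$-approximations of $B=A\cap S_0$ directly; your identification of the Property~B obstruction (and why the witness $C$ from the second half of \cref{immthm} does not suffice, as it need not lie in $\Loeb{\cG}$) is exactly the right point. What your route buys is that it only invokes the equality of the Loeb $\sigma$-algebras, never the equality of the measures, so it proves the formally stronger statement that $\Loeb{\cF}=\Loeb{\cG}$ alone forces $\mu(\bigcup\cF_0')\approx 0$; what the paper's route buys is that it avoids any appeal to an internal averaging/transfer of a combinatorial existence statement, staying entirely within elementary manipulations of the cover and interior operators. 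Both are complete proofs.
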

\begin{proof}
For every internal set $T\subset \Omega$, we define $\cov{\cF_0}{T}=\bigcup\{F\in \cF_0: F\cap T\neq \emptyset\}$ and $\inter{\cF_0}{T}=\bigcup\{F\in \cF_0: F\subset T\}$. 
We define $\cov{\cG_0}{T}$ and $\inter{\cG_0}{T}$ similarly. 
We now construct a non-increasing sequence $\{F_n': n\geq 0\}\subset \cF$ recursively. 
Define $F_0'=\bigcup \cF_0'$. 
Suppose that we have constructed $F_n'$, we now construct $F_{n+1}'$. 

\textbf{Case 1}: Suppose for every $A\in \cF_0\cap \PowerSet (F_n')$, there exists $B\in \cG_0$ such that $B\subset A$. Note that $F_n'\subset F_0'=\bigcup \cF_0'$. 
As $\cF_0'$ is a hyperfinite set, without loss of generality, assume $F_n'=\bigcup_{i\leq K}A_{i}$, where $K\in \NSE{\Nats}$ and $A_i\in \cF_0'$ for all $i\leq K$.  
For each $i\leq K$, pick $B_i\in \cG_0$ such that $B_i\subset A_i$. 
As $A_i\in \cF_0'$,  we know that $B_i\neq A_i$, which implies that $\bigcup_{i\leq K}B_{i}\subsetneq \bigcup_{i\leq K}A_i=F_n'$. 
Note that $\inter{\cF_0}{\bigcup_{i\leq K}B_i}=\emptyset$ and $\cov{\cF_0}{\bigcup_{i\leq K}B_i}=F_n'$. 
Thus, we conclude that $\mu(F_n')\approx 0$. 
In this case, for every $m>n$, we simply let $F'_m=\emptyset$.

\textbf{Case 2}: Suppose that there exists $A_n^{*}\in \cF_0\cap \PowerSet (F_n')$ such that no $G\in \cG_0$ is a subset of $A_n^{*}$. 
Clearly, $\cov{\cG_0}{F_n'}\setminus \cov{\cG_0}{A_n^{*}}$ is a proper subset of $\cov{\cG_0}{F_n'}$. 
Define $F_{n+1}'=\inter{\cF_0}{F_n'\setminus \cov{\cG_0}{A_n^{*}}}$. 
Note that $F_{n+1}'$ is a proper subset of $F_{n}'$, and $F_{n}'\setminus F_{n+1}'$ contains an element of 
$\cF_0$ (namely, $A_n^*$) as a subset. 
\begin{claim}\label{f0in}
$F_n'\subset F_{n+1}'\cup \cov{\cF_0}{\cov{\cG_0}{A_n^{*}}}$. 
\end{claim}
\begin{proof}
Let $A$ be an element of $\cF_0\cap \PowerSet(F_n')$. 
Suppose that $A$ is not a subset of $\cov{\cF_0}{\cov{\cG_0}{A_n^{*}}}$. 
This implies that $A\cap \cov{\cG_0}{A_n^{*}}=\emptyset$.
As $A\in \PowerSet(F_n')$, we conclude that $A\subset F_n'\setminus \cov{\cG_0}{A_n^{*}}$, hence implies that $A\subset F_{n+1}'$.
\end{proof}

We continue this process to construct an internal sequence $\{F_n': n\geq 0\}$.
As $F'_{n+1}\subsetneq F'_n$ and $F'_n\setminus F'_{n+1}$ contains some element in $\cF_0$ as a subset, 
there exists some $M\in \NSE{\Nats}$ such that Case 1 holds for $F'_M$.
In particular, for every $A\in \cF_0\cap \PowerSet (F_M')$, there exists $B\in \cG_0$ such that $B\subset A$. 
By the argument in Case 1, we conclude that $\mu(F_M')\approx 0$. 
Note that we also have $\{A_i^{*}: i<M\}$ as defined in Case 2. 
\begin{claim}\label{unionzero}
$\bigcup_{i=0}^{M-1}\cov{\cF_0}{\cov{\cG_0}{A_i^{*}}}$ is a $\Loeb{\mu}$-null set.  
\end{claim}
\begin{proof}
For $i,j<M-1$ such that $i\neq j$, without loss of generality, assume that $i<j$. 
Then, we have $A_{j}^{*}\subset \inter{\cF_0}{F'_{i}\setminus \cov{\cG_0}{A_i^{*}}}$. 
Suppose $\cov{\cG_0}{A_i^{*}}\cap \cov{\cG_0}{A_j^{*}}\neq \emptyset$.
Then there exists some $G\in \cG_0$ such that $G$ is contained in both $\cov{\cG_0}{A_i^{*}}$ and $\cov{\cG_0}{A_j^{*}}$ as a subset. This implies that $A_{j}^{*}\cap \cov{\cG_0}{A_i^{*}}\neq \emptyset$, hence a contradiction. 
So we have $\cov{\cG_0}{A_i^{*}}\cap \cov{\cG_0}{A_j^{*}}=\emptyset$. 
Note that, for every $i<M-1$, $A_i^{*}$ contains no $G\in \cG_0$ as a subset. 
Hence, we have $\inter{\cG_0}{\bigcup_{i=0}^{M-1}A_i^{*}}=\emptyset$. 
This implies that $\Loeb{\mu}(\bigcup_{i=0}^{M-1}A_i^{*})=\Loeb{\nu}(\bigcup_{i=0}^{M-1}A_i^{*})=0$. 
As $\bigcup_{i=0}^{M-1}\cov{\cG_0}{A_i^{*}}\subset \cov{\cG_0}{\bigcup_{i=0}^{M-1}A_i^{*}}$, 
we have 
\[
\Loeb{\nu}(\bigcup_{i=0}^{M-1}\cov{\cG_0}{A_i^{*}})=0, 
\]
which implies that $\cov{\cF_0}{\bigcup_{i=0}^{M-1}\cov{\cG_0}{A_i^{*}}}$ is a $\Loeb{\mu}$-null set. 
So $\bigcup_{i=0}^{M-1}\cov{\cF_0}{\cov{\cG_0}{A_i^{*}}}$ is $\Loeb{\mu}$-null. 
\end{proof}

Note that $F_n'\subset F_{n+1}'\cup \cov{\cF_0}{\cov{\cG_0}{A_i^{*}}}$. 
Thus, $F_0'\subset \bigcup_{n=0}^{M-1}\cov{\cF_0}{\cov{\cG_0}{A_i^{*}}}\cup F_{M}'$. 
Hence, we conclude that $\mu(\bigcup \cF_0')=\mu(F_0')\approx 0$, completing the proof. 
\end{proof}

Combining \cref{immthm}, \cref{covintresult} and \cref{unioneq}, we give an affirmative answer to \cref{loebeqq3} when both $\cM$ and $\cN$ are hyperfinite. 
\begin{theorem}\label{loebeqq3main}
Suppose $\cM=(\Omega, \cF, \mu)$ and $\cN=(\Omega, \cG, \nu)$ are two hyperfinite probability spaces that are Loeb equivalent to each other. 
Let $\cH$ be the hyperfinite algebra generated by $\cF\cup \cG$.
Then there is an internal probability measure $P$ on $\cH$ such that $\cM$ is Loeb equivalent to $(\Omega, \cH, P)$. 
\end{theorem}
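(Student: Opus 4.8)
The plan is to chain together the three ingredients assembled in this section, so that the proof itself is essentially bookkeeping. First I would fix the hyperfinite partitions $\cF_0\subset\cF$ and $\cG_0\subset\cG$ consisting of the $\subset$-minimal nonempty members of $\cF$ and of $\cG$ respectively; these exist by the discussion preceding \cref{immthm} and generate the internal algebras $\cF$ and $\cG$. I would then set $\cF_0'$ as before, namely the members of $\cF_0$ that meet at least two members of $\cG_0$. Note that $\cH$, the internal algebra generated by $\cF\cup\cG$, is itself hyperfinite because $\cF$ and $\cG$ are, so it is legitimate to call it ``the hyperfinite algebra generated by $\cF\cup\cG$''.

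Since $\cM$ and $\cN$ are a pair of Loeb equivalent hyperfinite probability spaces, \cref{covintresult} applies verbatim and yields $\mu(\bigcup\cF_0')\approx 0$. Feeding this into \cref{immthm}, whose hypothesis is again exactly that $\cM$ and $\cN$ are Loeb equivalent hyperfinite spaces, gives $\cH\subset\Loeb{\cF}$.

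Finally I would apply \cref{unioneq} to the hyperfinite probability space $(\Omega,\cF,\mu)$ and the hyperfinite algebra $\cG$ on $\Omega$: the internal algebra it generates together with $\cF$ is our $\cH$, and since $\cH\subset\Loeb{\cF}$ has just been established, \cref{unioneq} supplies an internal probability measure $P$ on $\cH$ for which $(\Omega,\cH,P)$ is Loeb equivalent to $(\Omega,\cF,\mu)=\cM$. That is precisely the conclusion of the theorem, so the proof is complete.

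There is no genuine obstacle left at this stage; the entire mathematical content has already been discharged — into \cref{covintresult} (the recursive construction of the strictly decreasing sequence $(F_n')$ together with \cref{f0in} and \cref{unionzero}, which sandwich $\bigcup\cF_0'$ between $\emptyset$ and a $\Loeb{\mu}$-null set up to infinitesimals), and into the two results \cref{immthm} and \cref{unioneq} carried over from \citep{ADSWloeb}. The only points needing care are that the partitions $\cF_0$, $\cG_0$ invoked in \cref{immthm} and in \cref{covintresult} are the same ones, and that the hypotheses of each cited statement hold literally; both are immediate. I would also remark, since it costs nothing, that by the symmetry between $\cF$ and $\cG$ the same argument gives $(\Omega,\cH,P)$ Loeb equivalent to $\cN$ as well, although the theorem as stated asks only for equivalence with $\cM$.
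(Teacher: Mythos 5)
Your proposal is correct and follows exactly the route the paper takes: the paper's own justification is the single line ``Combining \cref{immthm}, \cref{covintresult} and \cref{unioneq}\dots'', i.e.\ \cref{covintresult} gives $\mu(\bigcup\cF_0')\approx 0$, \cref{immthm} converts this to $\cH\subset\Loeb{\cF}$, and \cref{unioneq} then produces the internal probability measure $P$. Your added remarks (hyperfiniteness of $\cH$, consistency of the partitions $\cF_0,\cG_0$ across the cited lemmas, and the symmetry giving equivalence with $\cN$ as well) are accurate but not needed beyond what the paper records.
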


\section{\cref{loebeqq3} for General Internal Probability Spaces}

In this section, we give a partial answer to \cref{loebeqq3} when $\cM$ and $\cN$ are general internal probability spaces. 
We start with the following definition:
\begin{definition}
Let $X$ be an internal space equipped with an internal algebra $\cA$. 
An internal almost probability measure $P$ is an internal mapping from $\cA$ to $\NSE{[0,1]}$ such that:
\begin{enumerate}
\item $P(\emptyset)=0$ and $P(X)=1$;
\item For every $A, B\in \cA$, $P(A\cup B)\approx P(A)+P(B)$.
\end{enumerate}
\end{definition}

Note that internal almost probability spaces may not be internal probability spaces. One natural question to ask is:
\begin{question}\label{intamstintclose}
Is every internal almost probability space ``close" to some internal probability space ? 
\end{question}

As is well-known, Loeb probability measures are constructed from internal probability measures. 
The next lemma shows that we can construct Loeb probability measures from internal almost probability measures. 
The proof is almost identical to the proof of the existence of the Loeb measure (see {\citep[][Section.~4, Theorem.~2.1]{NSAA97}}). 
We give a full proof for completeness. 

\begin{lemma}\label{amstextend}
Let $(X, \cA, P)$ be an internal almost probability space. 
Then there exists a standard countably additive probability space $(X, \Loeb{\cA}, \Loeb{P})$ such that:
\begin{enumerate}
\item $\Loeb{\cA}$ is a $\sigma$-algebra with $\cA\subset \Loeb{\cA}\subset \PowerSet(X)$;
\item $\Loeb{P}=\ST(P)$ on $\cA$;
\item For every $A\in \Loeb{\cA}$ and every standard $\epsilon>0$, there exist $A_i, A_o\in \cA$ such that $A_i\subset A\subset A_o$ and $P(A_o\setminus A_i)<\epsilon$;
\item For every $A\in \Loeb{\cA}$, there is a $B\in \cA$ such that $\Loeb{P}(A\symdef B)=0$

\end{enumerate}

\end{lemma}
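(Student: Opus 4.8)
The plan is to run the classical construction of the Loeb measure (as in {\citep[][\S4, Thm.~2.1]{NSAA97}}) with the internal premeasure replaced by the standard part of $P$. First I would set $P_0 := \ST\circ P$ on $\cA$ and check that it is a finitely additive standard probability measure on the internal algebra $\cA$: $P_0(\emptyset)=0$ and $P_0(X)=1$ come from clause~(1) of the definition, while for disjoint $A,B\in\cA$ clause~(2) gives $P(A\cup B)\approx P(A)+P(B)$, and since $\ST$ is additive on the finite part of $\NSE{\Reals}$ and respects $\approx$ we get $P_0(A\cup B)=P_0(A)+P_0(B)$; monotonicity follows as usual. Next, the standard $\aleph_1$-saturation argument shows that whenever $A\in\cA$ is a disjoint union of countably many members of $\cA$, all but finitely many of them are empty — the internal tails $A\setminus\bigcup_{k\le n}A_k$ form a decreasing chain with empty intersection, so some tail is already $\emptyset$ — hence countable additivity of $P_0$ collapses to finite additivity and $P_0$ is a premeasure on the algebra $\cA$.

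With $P_0$ in hand, I would define $\Loeb{P}^{*}(E):=\inf\{P_0(C): E\subset C\in\cA\}$ for $E\subset X$ and verify, exactly as in the Loeb construction, that $\Loeb{P}^{*}$ is an outer measure extending $P_0$; the only non-routine point is countable subadditivity, where $\aleph_1$-saturation is used to replace a countable union of members of $\cA$ covering $E$ by a single member of $\cA$ of essentially the same $P_0$-size. Let $\Loeb{\cA}$ be the $\sigma$-algebra of $\Loeb{P}^{*}$-measurable sets and $\Loeb{P}:=\Loeb{P}^{*}$ on $\Loeb{\cA}$. Then $(X,\Loeb{\cA},\Loeb{P})$ is a complete countably additive probability space with $\cA\subset\Loeb{\cA}\subset\PowerSet(X)$ and $\Loeb{P}=P_0=\ST(P)$ on $\cA$, which gives (1) and (2). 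For (3): since $\Loeb{P}(X)=1$, complementation turns the definition of $\Loeb{P}^{*}$ into inner regularity, $\Loeb{P}(A)=\sup\{P_0(C):C\subset A,\ C\in\cA\}$ for $A\in\Loeb{\cA}$; so given standard $\epsilon>0$ I can pick $A_i,A_o\in\cA$ with $A_i\subset A\subset A_o$, $P_0(A_o)<\Loeb{P}(A)+\epsilon/2$ and $P_0(A_i)>\Loeb{P}(A)-\epsilon/2$, whence $P_0(A_o\setminus A_i)<\epsilon$ by finite additivity of $P_0$, and then $P(A_o\setminus A_i)\approx P_0(A_o\setminus A_i)<\epsilon$ forces $P(A_o\setminus A_i)<\epsilon$ because $\epsilon$ is standard.

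Finally, for (4), I would use (3) to choose for each $n\in\Nats$ internal sets $C_n\subset A\subset D_n$ with $P_0(D_n\setminus C_n)<1/n$, and, after replacing $C_n$ by $\bigcup_{k\le n}C_k$ and $D_n$ by $\bigcap_{k\le n}D_k$, assume $(C_n)$ increasing and $(D_n)$ decreasing. For each $n$ the set $\{B\in\cA: C_n\subset B\subset D_n\}$ is a nonempty internal set, and finitely many of them share the common element $C_N$ with $N$ the largest index, so by $\aleph_1$-saturation there is $B\in\cA$ with $C_n\subset B\subset D_n$ for every $n$. Then $A\symdef B\subset D_n\setminus C_n$ for all $n$, so $\Loeb{P}^{*}(A\symdef B)\le\inf_n P_0(D_n\setminus C_n)=0$, and since $A\symdef B\in\Loeb{\cA}$ we get $\Loeb{P}(A\symdef B)=0$. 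I do not expect a serious obstacle here, since this is essentially the textbook Loeb construction: the only genuinely nonstandard steps are the two appeals to $\aleph_1$-saturation (for subadditivity of $\Loeb{P}^{*}$ and for the squeezing in (4)), which are exactly as in the classical case, and the single new observation — that passing to standard parts upgrades the merely ``almost additive'' $P$ to the honestly finitely additive $P_0$ — is immediate, which is why the proof is ``almost identical'' to the standard one.
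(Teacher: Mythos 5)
Your proposal is correct and follows essentially the same route as the paper: observe that $\ST(P)$ is a genuine finitely additive probability measure on $\cA$, use $\aleph_1$-saturation to see that the premeasure condition holds vacuously (a decreasing chain in $\cA$ with empty intersection has an empty tail), invoke Carath\'eodory for items (1)--(3), and then squeeze an internal $B$ between the approximating sequences by saturation for item (4). The only difference is that you spell out the outer-measure construction and the inner-regularity step explicitly where the paper simply cites the extension theorem; the substance is identical.
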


\begin{proof}
Note that $(X, \cA, \ST(P))$ is a finitely additive probability space. 
Let $A_0\supset A_1\supset A_2\dotsc \supset A_n\dotsc$ be a countable non-increasing sequence of elements in $\cA$ such that $\bigcap_{n\in \Nats}A_n=\emptyset$.  By saturation,  $A_N=\emptyset$ for all $N\in \NSE{\Nats}\setminus \Nats$. Thus, we have $\lim_{n\to \infty}\ST(P)(A_n)=\ST(P)(A_N)=0$. So the first three items follow from the Caratheodory extension theorem. 

We now show that the fourth item is also valid. 
Pick $A\in \Loeb{\cA}$. 
For every $n\in \Nats$, there exist $A_i^{n}, A_o^{n}\in \cA$ such that $A_i^{n}\subset A\subset A_o^{n}$ and 
$P(A_o^{n}\setminus A_i^{n})<\epsilon$. 
Without loss of generality, we can assume that the sequence $\{A_i^{n}\}_{n\in \Nats}$ is a non-decreasing sequence, and the sequence $\{A_o^{n}\}_{n\in \Nats}$ is a non-increasing sequence.  
By saturation, there exists $B\in \cA$ such that $A_i^{n}\subset B\subset A_o^{n}$ for all $n\in \Nats$. 
Then, for each $n\in \Nats$, we have
\[
\Loeb{P}(A\symdef B)\leq 2\Loeb{P}(A_o^{n}\setminus A_i^{n})\approx \frac{2}{n}.
\]
Hence, we have $\Loeb{P}(A\symdef B)=0$. 
\end{proof}

As one can apply the Loeb construction to internal almost probability measures, for an internal almost probability space $(X, \cA, P)$, with a slight abuse of notation, we call the standard probability space $(\Omega, \Loeb{\cA}, \Loeb{P})$ the Loeb space generated from $(X, \cA, P)$.  
The notions of Loeb extension and equivalence extend naturally to internal almost probability spaces. 

\begin{definition}
Let $\cM=(\Omega, \cF, \mu)$ and $\cN=(\Omega, \cG, \nu)$ be two internal almost probability spaces. 
$\cN$ Loeb extends $\cM$ if $\Loeb{\cF}\subset \Loeb{\cG}$ and $\Loeb{\nu}$ extends $\Loeb{\mu}$ as a function. 
$\cN$ is Loeb equivalent to $\cM$ if $\Loeb{\cF}=\Loeb{\cG}$ and $\Loeb{\nu}=\Loeb{\mu}$. 
\end{definition}

\begin{theorem}\label{amstprobexst}
Suppose $\cM=(\Omega, \cF, \mu)$ be an internal almost probability space and let $\cG$ be an internal algebra on $\Omega$.
Then $(\Omega, \cF, \mu)$ Loeb extends $(\Omega, \cG, P)$ for some internal almost probability measure $P$ if and only if $\cG\subset \Loeb{\cF}$. 
\end{theorem}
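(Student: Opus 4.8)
The plan is to prove both implications, with the reverse one carrying the real content. For the forward direction, suppose $(\Omega,\cF,\mu)$ Loeb extends $(\Omega,\cG,P)$ for some internal almost probability measure $P$. By definition of Loeb extension this gives $\Loeb{\cG}\subset\Loeb{\cF}$, while \cref{amstextend} applied to $(\Omega,\cG,P)$ gives $\cG\subset\Loeb{\cG}$; composing the two inclusions yields $\cG\subset\Loeb{\cF}$.

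For the reverse direction, assume $\cG\subset\Loeb{\cF}$ and define an internal candidate for $P$ by
\[
P(G)\defas\sup\Of{\mu(F):F\in\cF,\ F\subset G},\qquad G\in\cG.
\]
Since $\mu$ and $\cF$ are internal and $G$ is a fixed internal set, $\Of{\mu(F):F\in\cF,\ F\subset G}$ is an internal subset of $\NSE{[0,1]}$, so $P(G)\in\NSE{[0,1]}$ and, by the internal definition principle, $G\mapsto P(G)$ is an internal map on $\cG$; clearly $P(\emptyset)=\mu(\emptyset)=0$ and $P(\Omega)=\mu(\Omega)=1$. (Taking a supremum over $\cF$, rather than choosing an approximating $F$ for each $G$ separately, is what keeps $P$ internal.) The crucial point is then that $\ST(P(G))=\Loeb{\mu}(G)$ for every $G\in\cG$: since $G\in\Loeb{\cF}$, the definition of the Loeb extension gives $\Loeb{\mu}(G)=\sup\Of{\ST(\mu(F)):F\in\cF,\ F\subset G}$, and as $\ST(\mu(F))\le\Loeb{\mu}(G)$ for each such $F$ we get $\ST(P(G))\le\Loeb{\mu}(G)$; conversely, because the supremum defining $P(G)$ is the supremum of an internal set, for each standard $\epsilon>0$ there is some $F\in\cF$ with $F\subset G$ and $\mu(F)>P(G)-\epsilon$, whence $\Loeb{\mu}(G)\ge\ST(\mu(F))\ge\ST(P(G))-\epsilon$, and since $\epsilon$ was arbitrary, $\ST(P(G))=\Loeb{\mu}(G)$, i.e. $P(G)\approx\Loeb{\mu}(G)$. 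Condition (2) of the definition of an almost probability measure now follows: for disjoint $A,B\in\cG$ we have $\ST(P(A\cup B))=\Loeb{\mu}(A\cup B)=\Loeb{\mu}(A)+\Loeb{\mu}(B)=\ST(P(A)+P(B))$ by finite additivity of $\Loeb{\mu}$ on $\Loeb{\cF}$, so $P(A\cup B)\approx P(A)+P(B)$ and $(\Omega,\cG,P)$ is an internal almost probability space.

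It remains to check that $(\Omega,\cF,\mu)$ Loeb extends $(\Omega,\cG,P)$, i.e. that $\Loeb{\cG}\subset\Loeb{\cF}$ and $\Loeb{\mu}=\Loeb{P}$ on $\Loeb{\cG}$. Note that $\Loeb{P}=\ST\circ P=\Loeb{\mu}$ on $\cG$ by the previous step. Given $A\in\Loeb{\cG}$, apply \cref{amstextend}(3) to obtain, for each $n\in\Nats$, sets $A_i^n\subset A\subset A_o^n$ in $\cG$ with $P(A_o^n\setminus A_i^n)<1/n$, arranged so that $\{A_i^n\}_n$ is nondecreasing and $\{A_o^n\}_n$ is nonincreasing. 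Then $A_i\defas\bigcup_nA_i^n$ and $A_o\defas\bigcap_nA_o^n$ lie in $\Loeb{\cF}$ (a $\sigma$-algebra containing $\cG$), satisfy $A_i\subset A\subset A_o$, and $\Loeb{\mu}(A_o\setminus A_i)\le\Loeb{\mu}(A_o^n\setminus A_i^n)=\Loeb{P}(A_o^n\setminus A_i^n)\le1/n$ for all $n$, hence $\Loeb{\mu}(A_o\setminus A_i)=0$. By completeness of the Loeb measure $\Loeb{\mu}$ this forces $A\in\Loeb{\cF}$ with $\Loeb{\mu}(A)=\Loeb{\mu}(A_i)=\lim_n\Loeb{\mu}(A_i^n)=\lim_n\Loeb{P}(A_i^n)=\Loeb{P}(A)$, which is exactly what is required.

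The main obstacle is the crucial step in the second paragraph: producing an \emph{internal} measure $P$ that is nonetheless infinitely close, simultaneously on all of $\cG$, to the externally defined function $\Loeb{\mu}$ restricted to $\cG$. This is resolved by the supremum-over-$\cF$ formula together with the elementary observation that the standard part of the supremum of an internal set of hyperreals equals the supremum of the standard parts of its elements (internal suprema being approximated from within the set). Once this is in hand, the remaining arguments — almost additivity of $P$ and the Loeb-extension check — are the usual Loeb-measure approximation routine, now available for almost probability spaces thanks to \cref{amstextend}.
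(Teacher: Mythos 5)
Your proof is correct and reaches the same conclusion, but the key construction is genuinely different from the paper's. The paper obtains $P$ by an underspill argument: since for every standard $\epsilon>0$ each $G\in\cG$ admits $G_i^{\epsilon}\subset G\subset G_o^{\epsilon}$ in $\cF$ with $\mu(G_o^{\epsilon}\setminus G_i^{\epsilon})<\epsilon$, underspill yields a single positive infinitesimal $\epsilon_0$ that works uniformly for all $G\in\cG$, and the paper sets $P_0(G)=\mu(G_i^{\epsilon_0})$ (implicitly invoking an internal choice of the sets $G_i^{\epsilon_0}$). You instead set $P(G)=\sup\{\mu(F):F\in\cF,\ F\subset G\}$, which is internal by the internal definition principle together with the transferred least-upper-bound property for internal bounded subsets of $\NSE{\Reals}$; this sidesteps both the underspill step and the choice function, and is arguably cleaner. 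Both routes then reduce to the same fact, namely $\ST(P(G))=\Loeb{\mu}(G)$ for $G\in\cG$, and finish in essentially the same way. Two small remarks. First, in your verification of $\ST(P(G))=\Loeb{\mu}(G)$ the two halves of the displayed argument both establish the inequality $\ST(P(G))\le\Loeb{\mu}(G)$; the reverse inequality is immediate (from $P(G)\ge\mu(F)$ for every admissible $F$ one gets $\ST(P(G))\ge\ST(\mu(F))$, and taking the supremum over such $F$ gives $\ST(P(G))\ge\Loeb{\mu}(G)$), but as written the equality is not actually derived, so you should add that line. Second, your final paragraph checking that $\Loeb{\mu}$ and $\Loeb{P}$ agree on all of $\Loeb{\cG}$, not merely on $\cG$, is more complete than the paper's proof, which only verifies agreement on $\cG$ and leaves the propagation to $\Loeb{\cG}$ implicit; since the definition of Loeb extension requires $\Loeb{\mu}$ to extend $\Loeb{P}$ as a function on $\Loeb{\cG}$, this extra step is welcome.
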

\begin{proof}
Clearly, if there exists such an internal almost probability measure $P$, then $\cG\subset \Loeb{\cF}$. 
Now, suppose that $\cG\subset \Loeb{\cF}$. 
For every $\epsilon>0$ and every $G\in \cG$, there exist $G_i^{\epsilon}, G_o^{\epsilon}\in \cF$ such that $G_i^{\epsilon}\subset G\subset G_o^{\epsilon}$ and $\mu(G_o^{\epsilon}\setminus G_i^{\epsilon})<\epsilon$. 
By underspill, there exists some positive infinitesimal $\epsilon_0$ such that, for every $G\in \cG$, there exist $G_i^{\epsilon_0}, G_o^{\epsilon_0}\in \cF$ such that $G_i^{\epsilon_0}\subset G\subset G_o^{\epsilon_0}$ and $\mu(G_o^{\epsilon_0}\setminus G_i^{\epsilon_0})<\epsilon_0$. Define an internal mapping $P_0: \cG\to \NSE{[0,1]}$ by letting $P_0(G)=\mu(G)$ if $G\in \cF$ and $P_0(G)=\mu(G_i^{\epsilon_0})$ otherwise. 
\begin{claim}
$P_0$ is an internal almost probability measure on $(\Omega, \cG)$. 
\end{claim} 
\begin{proof}
Clearly, we have $P_0(\emptyset)=0$ and $P_0(\Omega)=1$. 
For $A, B\in \cG$, we have
\[
P_0(A\cup B)\approx \mu(A\cup B)\approx \mu(A)+\mu(B)\approx P_0(A)+P_0(B).
\]
\end{proof}
We now show that $(\Omega, \cF, \mu)$ Loeb extends $(\Omega, \cG, P_0)$. 
It is sufficient to show that $\Loeb{P_0}$ and $\Loeb{\mu}$ agree on $\cG$. 
For every $G\in \cG$, we have 
\[
\Loeb{P_0}(G)\approx P_0(G)\approx \mu(G_i^{\epsilon_0})\approx \Loeb{\mu}(G). 
\]
Hence, we have the desired result. 
\end{proof}

The following theorem provides a partial answer to \cref{loebeqq3} for general internal probability spaces. 

\begin{theorem}\label{amsinternalapprox}
Let $(\Omega, \cF, \mu)$ be an internal probability space and let $\cG$ be an internal algebra on $\Omega$. 
Let $\cH$ be an internal algebra generated by $\cF\cup \cG$. 
Then $(\Omega, \cH, P)$ is Loeb equivalent to $(\Omega, \cF, \mu)$ for some internal almost probability measure $P$ if and only if $\cH\subset \Loeb{\cF}$. 
\end{theorem}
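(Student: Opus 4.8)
I would handle the "only if" direction in one line: if $(\Omega,\cH,P)$ is Loeb equivalent to $(\Omega,\cF,\mu)$ then $\Loeb{\cH}=\Loeb{\cF}$, and the first item of \cref{amstextend}, applied to the internal almost probability space $(\Omega,\cH,P)$, gives $\cH\subseteq\Loeb{\cH}=\Loeb{\cF}$.

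For the "if" direction, assume $\cH\subseteq\Loeb{\cF}$. The plan is to build $P$ via \cref{amstprobexst} — which supplies one of the two inclusions of $\sigma$-algebras for free — and then to get the reverse inclusion out of the fact that $\cF\subseteq\cH$ together with completeness of the Loeb measure. First I would apply \cref{amstprobexst} with the algebra called "$\cG$" there taken to be $\cH$; since $\cH\subseteq\Loeb{\cF}$ by hypothesis, this produces an internal almost probability measure $P$ on $\cH$ for which $(\Omega,\cF,\mu)$ Loeb extends $(\Omega,\cH,P)$, i.e.\ $\Loeb{\cH}\subseteq\Loeb{\cF}$ and $\Loeb{\mu}$ extends $\Loeb{P}$ as a function. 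Because $\cF\subseteq\cH$, combining this with the second item of \cref{amstextend} (applied to both $(\Omega,\cF,\mu)$ and $(\Omega,\cH,P)$) yields $\ST(\mu(A))=\Loeb{\mu}(A)=\Loeb{P}(A)=\ST(P(A))$ for every $A\in\cF$; that is, $P\approx\mu$ on $\cF$.

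It then remains to prove $\Loeb{\cF}\subseteq\Loeb{\cH}$: once that is in hand, $\Loeb{\cH}=\Loeb{\cF}$, and since $\Loeb{\mu}$ extends $\Loeb{P}$ and the two measures now share a domain, $\Loeb{\mu}=\Loeb{P}$, which is Loeb equivalence. To this end I would fix $B\in\Loeb{\cF}$, use the fourth item of \cref{amstextend} (for $(\Omega,\cF,\mu)$) to pick $D\in\cF$ with $\Loeb{\mu}(B\symdef D)=0$, and then, since $B\symdef D\in\Loeb{\cF}$ with $\Loeb{\mu}(B\symdef D)=0$, use outer regularity (immediate from the definition of $\Loeb{\mu}$) to choose for each $n\in\Nats$ a set $C_n\in\cF$ with $B\symdef D\subseteq C_n$ and $\ST(\mu(C_n))\le 1/n$. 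Then $N\defas\bigcap_n C_n$ lies in the $\sigma$-algebra $\Loeb{\cH}$ (as $\cF\subseteq\cH\subseteq\Loeb{\cH}$), and $\Loeb{P}(N)\le\ST(P(C_n))=\ST(\mu(C_n))\le 1/n$ for every $n$, so $\Loeb{P}(N)=0$. Since $B\symdef D\subseteq N$ and the Loeb measure $\Loeb{P}$ is complete, $B\symdef D\in\Loeb{\cH}$, hence $B=D\symdef(B\symdef D)\in\Loeb{\cH}$.

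I expect the proof to be essentially bookkeeping given \cref{amstprobexst} and \cref{amstextend}. The one point that requires care — and the only place where the specific hypotheses enter — is that Loeb equivalence is a two-sided condition: \cref{amstprobexst} by design only yields $\Loeb{\cH}\subseteq\Loeb{\cF}$, so the reverse inclusion must be argued by hand, and it is exactly the hypothesis $\cF\subseteq\cH$ (which forces $P\approx\mu$ on $\cF$ and makes $\cF$-null sets $\cH$-measurable), together with completeness of the Loeb measure, that makes this possible.
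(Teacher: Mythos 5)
Your proposal is correct and follows the same route as the paper: both directions rest on \cref{amstprobexst} applied with the algebra there taken to be $\cH$, plus the observation that $\cF\subseteq\cH$ forces $\Loeb{P}=\Loeb{\mu}$ on $\cF$. The paper compresses the remaining step (that $\Loeb{\cF}\subseteq\Loeb{\cH}$, hence full Loeb equivalence) into ``hence, we have the desired result''; your null-set and completeness argument correctly supplies exactly that omitted detail.
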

\begin{proof}
Suppose there exists such an internal almost probability measure $P$, then we clearly have $\cH\subset \Loeb{\cF}$, which further implies that $\Loeb{\cH}=\Loeb{\cF}$. 
Now suppose that $\cH\subset \Loeb{\cF}$. 
By \cref{amstprobexst}, there exists an internal almost probability measure $P$ on $\cH$ such that $(\Omega, \cF, \mu)$ Loeb extends $(\Omega, \cH, P)$. 
As $\cF\subset \cH$, we have $\Loeb{P}(A)=\Loeb{\mu}(A)$ for all $A\in \cF$. 
Hence, we have the desired result. 
\end{proof} 

A natural question to ask is: 
\begin{question}\label{internalapprox}
Let $(\Omega, \cF, \mu)$ be an internal probability space and let $\cG$ be an internal algebra on $\Omega$. 
Let $\cH$ be an internal algebra generated by $\cF\cup \cG$. 
Suppose that $\cH\subset \Loeb{\cF}$, does there exist an internal probability measure $P$ such that $(\Omega, \cH, P)$ is Loeb equivalent to $(\Omega, \cF, \mu)$?
\end{question}

A confirmative answer of \cref{intamstintclose} is likely to give a confirmative answer to \cref{internalapprox}. 
\cref{amsinternalapprox} and \cref{internalapprox} are related to the following more general question: 
\begin{question}\label{reverseloeb}
Let $\Omega$ be an internal space and $\cF$ be an internal algebra. 
Let $\sigma(\cF)$ be the $\sigma$-algebra generated from $\cF$ and $P$ be a probability measure on $\sigma(\cF)$. 
Does there exist an (almost) internal probability measure $\mu$ on $\cF$ such that the Loeb extension $\Loeb{\mu}$ agrees with $P$ on $\sigma(\cF)$?
\end{question}

The Loeb construction allows one to construct a standard countably additive probability measure from an internal probability measure. 
\cref{reverseloeb} asks if every standard probability measure on the $\sigma$-algebra generated from the internal algebra is the Loeb extension of some internal probability measure.

\section{A Comprehensive Answer to \cref{fthsun}}

In this section, we provide a negative answer to \cref{stfthsun} and hence also to \cref{fthsun}. 
On the other hand, we show that every continuity set in the $\sigma$-algebra of the $\sigma$-product space is also in the completion of the finite product space. We start by introducing the following definition:  

\begin{definition}\label{hyperapproxsp}
Let $(X,d)$ be a compact metric space with Borel $\sigma$-algebra $\BorelSets X$.
A hyperfinite representation of $X$ is a tuple $(S_X,\{B_X(s)\}_{s\in S_X})$ such that

\begin{enumerate}
\item $S_X$ is a hyperfinite subset of $\NSE{X}$.
\item $s\in B_X(s)\in \NSE{\BorelSets X}$ for every $s\in S_X$.
\item For every $s\in S_X$, the diameter of $B_X(s)$ is infinitesimal.
\item The hyperfinite collection $\{B_X(s: x\in S_X)\}$ forms a *partition of $\NSE{X}$. 
\end{enumerate}
For every $x\in \NSE{X}$, we use $s_{x}$ to denote the unique element in $S$ such that $x\in B(s_x)$.
\end{definition}

The next theorem shows that hyperfinite representations exist for product spaces under moderate conditions. 
Its proof is almost identical to {\citep[][Thm.~6.6]{Markovpaper}}, hence is omitted. 

\begin{theorem}\label{exhyper}
Let $X, Y$ be two compact metric space with Borel $\sigma$-algebras $\BorelSets X$ and $\BorelSets Y$, respectively. 
Then there exists a hyperfinite representation $(S, \{B(s)\}_{s\in S})$ of $X\times Y$ 
such that $B(s)\in \NSE{\BorelSets X}\mprod \NSE{\BorelSets Y}$ for every $s\in S$. 
\end{theorem}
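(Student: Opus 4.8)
The plan is to construct the hyperfinite representation by transferring a standard construction. First I would fix, for each $n\in\Nats$, a finite partition $\mathcal P_n^X$ of $X$ into Borel sets of diameter less than $1/n$ (possible by compactness: cover $X$ by finitely many balls of radius $1/(2n)$ and disjointify), together with a choice of a point in each nonempty cell; similarly a partition $\mathcal P_n^Y$ of $Y$. Their common refinement $\mathcal P_n^X \times \mathcal P_n^Y$ (intersecting cells pairwise) is then a finite Borel partition of $X\times Y$ into rectangles, each of diameter less than $\sqrt2/n$ in the product metric, with a distinguished point in each nonempty cell. The map $n\mapsto (\mathcal P_n^X\times\mathcal P_n^Y,\text{chosen points})$ is a standard sequence, so by transfer it extends to all $n\in\NSE{\Nats}$; I would pick an infinite $N$ and set $S$ to be the (hyperfinite) set of chosen points of the cells of $\mathcal P_N^X\times\mathcal P_N^Y$, with $B(s)$ the cell containing $s$.

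Next I would verify the four clauses of \cref{hyperapproxsp}. Clause (1): $S$ is hyperfinite since it is the transfer of a finite set indexed by $N$. Clause (2): by transfer each chosen point lies in its cell, and each cell is in $\NSE{\BorelSets{X\times Y}}$. Clause (3): by transfer each cell of $\mathcal P_N^X\times\mathcal P_N^Y$ has diameter $<\sqrt2/N$, which is infinitesimal since $N$ is infinite. Clause (4): by transfer the cells form a $\NSE{}$partition of $\NSE{(X\times Y)}$. Finally, the extra requirement $B(s)\in\NSE{\BorelSets X}\mprod\NSE{\BorelSets Y}$: each cell of $\mathcal P_n^X\times\mathcal P_n^Y$ is by construction of the form $U\cap V$ with $U\in\NSE{\BorelSets X}$ (the $\NSE{}$-transfer of a cell of $\mathcal P_n^X$, viewed as a cylinder) and $V\in\NSE{\BorelSets Y}$; transferring to $N$, $B(s)$ is a finite intersection of such cylinders and hence lies in $\NSE{\BorelSets X}\mprod\NSE{\BorelSets Y}$.

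Since the authors say the proof is ``almost identical to \citep[][Thm.~6.6]{Markovpaper}'' and is omitted, the only genuine point requiring care — and the main obstacle — is bookkeeping the product structure: one must make the finite partitions at stage $n$ refinements of honest \emph{rectangular} partitions so that every cell is literally a rectangle $U\times V$ with $U,V$ Borel, rather than merely a Borel subset of $X\times Y$; this is what guarantees clause (4) combined with the membership $B(s)\in\NSE{\BorelSets X}\mprod\NSE{\BorelSets Y}$ after transfer. Everything else is a routine transfer argument.
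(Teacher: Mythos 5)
Your construction is correct and is essentially the same as the one the paper points to: the proof is omitted here with a citation to \citep[Thm.~6.6]{Markovpaper}, which builds hyperfinite representations exactly by transferring finite Borel partitions of small diameter and evaluating at an infinite index, and your only addition --- keeping the stage-$n$ partition a product of a partition of $X$ with a partition of $Y$ so that each cell is a genuine rectangle $U\times V$ and hence lies in $\NSE{\BorelSets X}\mprod\NSE{\BorelSets Y}$ --- is precisely the modification the authors have in mind. The minor imprecisions (writing $U\cap V$ for $(U\times Y)\cap(X\times V)$, and the constant $\sqrt2/n$ versus $1/n$ for the max metric the paper later uses) do not affect the argument.
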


Loeb measures can usually be used to represent standard measures via the standard part map. 

\begin{theorem}[{\citep[][Thm.~3.3]{anderson87}}]\label{bobrepresent}
Let $(X, \cF, \mu)$ be a Radon probability space. Then, for every $E\in \cF$, we have $\mu(E)=\Loeb{\NSE{\mu}}(\ST^{-1}(E))$. 
\end{theorem}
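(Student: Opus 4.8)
This is Anderson's representation theorem, and my plan is to run the standard argument that converts the regularity of a Radon measure into statements about monads and the Loeb construction. Write $\NSE{\mu}$ for the internal measure on $\NSE{\cF}$, $\Loeb{\NSE{\mu}}$ for its Loeb extension, and $\ST$ for the standard part map on the set $\mathrm{ns}(\NSE{X})$ of near-standard points, with $\ST^{-1}(E)=\{y\in\mathrm{ns}(\NSE{X}):\ST(y)\in E\}$. Let $\Loeb{\NSE{\mu}}^{*}$ denote the Loeb outer measure, which (by saturation) equals $\inf\{\ST(\NSE{\mu}(C)):A\subseteq C\in\NSE{\cF}\}$ on any set $A$. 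The goal is to show $\ST^{-1}(E)$ is Loeb measurable with $\Loeb{\NSE{\mu}}(\ST^{-1}(E))=\mu(E)$ for every $E\in\cF$.

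First I would record three elementary facts. (i) If $U\subseteq X$ is open then $\ST^{-1}(U)\subseteq\NSE{U}$, since an open set contains the monad of each of its points. (ii) If $K\subseteq X$ is compact then, by the nonstandard characterization of compactness in the Hausdorff space $X$ (every point of $\NSE{K}$ is near-standard with standard part in $K$), one has $\NSE{K}\subseteq\ST^{-1}(K)$; moreover $\NSE{K}\in\NSE{\cF}$ and $\ST(\NSE{\mu}(\NSE{K}))=\mu(K)$. (iii) $\NSE{X}\setminus\mathrm{ns}(\NSE{X})$ is $\Loeb{\NSE{\mu}}^{*}$-null: tightness of the Radon probability $\mu$ gives, for each standard $\epsilon>0$, a compact $K$ with $\mu(X\setminus K)<\epsilon$, hence $\NSE{X}\setminus\mathrm{ns}(\NSE{X})\subseteq\NSE{X}\setminus\NSE{K}$, of internal measure below $\epsilon$. (When $X$ is compact, fact (iii) is vacuous.)

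Next, fix $E\in\cF$. For the upper bound, given standard $\epsilon>0$ choose (outer regularity, which for a probability measure follows from inner regularity) an open $U\supseteq E$ with $\mu(U)<\mu(E)+\epsilon$; then by (i), $\ST^{-1}(E)\subseteq\ST^{-1}(U)\subseteq\NSE{U}$, so $\Loeb{\NSE{\mu}}^{*}(\ST^{-1}(E))\le\mu(U)<\mu(E)+\epsilon$. For the lower bound, choose (inner regularity) a compact $K\subseteq E$ with $\mu(K)>\mu(E)-\epsilon$; then by (ii), $\NSE{K}\subseteq\ST^{-1}(K)\subseteq\ST^{-1}(E)$, so $\Loeb{\NSE{\mu}}^{*}(\ST^{-1}(E))\ge\ST(\NSE{\mu}(\NSE{K}))=\mu(K)>\mu(E)-\epsilon$. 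Letting $\epsilon\downarrow 0$ gives $\Loeb{\NSE{\mu}}^{*}(\ST^{-1}(E))=\mu(E)$ for every $E\in\cF$.

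Finally I would upgrade the outer measure to the measure via the complement. Applying the previous step to $X\setminus E\in\cF$ yields $\Loeb{\NSE{\mu}}^{*}(\ST^{-1}(X\setminus E))=1-\mu(E)$; since $\NSE{X}\setminus\ST^{-1}(E)=\ST^{-1}(X\setminus E)\cup(\NSE{X}\setminus\mathrm{ns}(\NSE{X}))$ and the second set is outer-null by (iii), monotonicity and subadditivity give $\Loeb{\NSE{\mu}}^{*}(\NSE{X}\setminus\ST^{-1}(E))=1-\mu(E)$ as well. Hence $\Loeb{\NSE{\mu}}^{*}(\ST^{-1}(E))+\Loeb{\NSE{\mu}}^{*}(\NSE{X}\setminus\ST^{-1}(E))=1=\Loeb{\NSE{\mu}}(\NSE{X})$, and the inner-equals-outer criterion for Loeb measurability then forces $\ST^{-1}(E)\in\Loeb{\NSE{\cF}}$ with $\Loeb{\NSE{\mu}}(\ST^{-1}(E))=\mu(E)$. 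The only substantive point is this last step: it is precisely the Radon hypothesis — outer regularity feeding the upper bound, inner regularity and tightness feeding the lower bound and disposing of the non-near-standard part — that makes the two complementary outer measures sum exactly to $1$; everything else is routine manipulation of monads and the Loeb outer measure.
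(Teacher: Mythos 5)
The paper does not prove this statement: it is imported verbatim as Theorem~3.3 of Anderson's paper and used as a black box, so there is no in-paper argument to compare against. Your proof is the standard regularity argument for Anderson's representation theorem and it is correct: the three monad facts (open sets absorb monads, compact sets consist of near-standard points, tightness kills the non-near-standard part) are all accurately stated, the derivation of outer regularity from inner regularity for a finite Radon measure is legitimate, and the sandwich $\NSE{K}\subseteq\ST^{-1}(E)\subseteq\NSE{U}$ with $\mu(U)-\mu(K)<2\epsilon$ does the work. One small remark: your final complement-and-outer-measure step is more roundabout than necessary. The same sandwich already exhibits an internal subset $\NSE{K}$ and an internal superset $\NSE{U}$ of $\ST^{-1}(E)$ whose measures differ by less than $2\epsilon$, which verifies directly the paper's definition of Loeb measurability (supremum over internal subsets equals infimum over internal supersets) together with the value $\mu(E)$; the Carath\'eodory-style criterion $\Loeb{\NSE{\mu}}^{*}(A)+\Loeb{\NSE{\mu}}^{*}(A^{c})=1$ is equivalent but adds bookkeeping. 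This is a stylistic point only; the argument as written is sound.
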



%
%

The completion of a probability space $(X, \cF, \mu)$ can be defined in the following to equivalent ways:
\begin{enumerate}
\item The completion $\cF^{c}$ is the collection of all sets of the form $A\cup C$, where $A\in \cF$ and $C$ is a subset of some 
$B\in \cF$ with $\mu(B)=0$;

\item The completion $\cF^{c}$ is the collection of all sets $A\subset X$ such that, for all $\epsilon>0$, there exists 
$A_1, A_2\in \cF$ with $A_1\subset A\subset A_2$ and $\mu(A_2\setminus A_1)<\epsilon$. 
\end{enumerate}

If $(X, \cF, \mu)$ is a finitely additive probability space, then these two definitions of completion are not the same. 
To see this, we consider the following example: 

\begin{example}
Let $\cM_0$ be $([0,1], \BorelSets {[0,1]})$ with uniform probability measure $\mu$. 
Let $D=\{(x,x): x\in [0,1]\}$ be the diagonal of $[0,1]\times [0,1]$. 
Clearly, we have $D\in (\cM_0\smprod \cM_0)^{c}$ and $(\mu\smprod \mu)(D)=0$. 
We now show that $D$ is not contained in any null set in $\cM_0\mprod \cM_0$.  

Suppose it is. Then $D$ must be contained in some $E\in \cM_0\mprod \cM_0$ such that $(\mu\mprod \mu)(E)=0$. 
Without loss of generality, assume that $E=\bigcup_{i=1}^{n}(A_i\times B_i)$ where $i\in \Nats$ and $A_i, B_i\in \BorelSets {[0,1]}$. 
For $i\leq n$, let $D_i=D\cap (A_i\times B_i)=(A_i\cap B_i)\times (A_i\cap B_i)$.
It is clear that $D=\bigcup_{i=1}^{n}D_i$ hence $\bigcup_{i=1}^{n} (A_i\cap B_i)=[0,1]$. 
As $(\mu\mprod \mu)(E)=0$, we know that $(\mu \mprod \mu)(A_i\times B_i)=0$ for every $i\leq n$. 
This immediately implies that $\mu(A_i\cap B_i)=0$ for every $i\leq n$. 
However, as $\bigcup_{i=1}^{n} A_i\cap B_i=[0,1]$, we must have $\mu(\bigcup_{i=1}^{n} (A_i\cap B_i))=1$ which clearly leads to a contradiction. Thus, $D$ is not contained in any null set in $\cM_0\mprod \cM_0$.

On the other hand, for every $\epsilon>0$, there exists $F\in \cM_0\mprod \cM_0$ such that $D\subset F$ and $\mu(F)<\epsilon$. 
\end{example}

For the rest of this section, when $(X, \cF, \mu)$ is a finitely additive probability space, the completion $\cF^{c}$ is taken to be 
the collection of all sets $A\subset X$ such that, for all $\epsilon>0$, there exists 
$A_1, A_2\in \cF$ with $A_1\subset A\subset A_2$ and $\mu(A_2\setminus A_1)<\epsilon$. 
We now prove the first main result of this section. 

\begin{theorem}\label{falgeeq}
Let $\cA_1=(X, \BorelSets X, P_1)$ and $\cA_2=(Y, \BorelSets Y, P_2)$ 
be two Borel probability spaces such that both $X$ and $Y$ are compact metric spaces. 
Then every $P_1\smprod P_2$-continuity set is an element of $(\BorelSets X\mprod \BorelSets Y)^{c}$.

\end{theorem}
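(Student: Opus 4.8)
### Proof Proposal

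The plan is to use the nonstandard representation of the relevant measures via the standard part map, combined with the hyperfinite representation of the product space guaranteed by \cref{exhyper}. Recall that a $P_1 \smprod P_2$-continuity set is a set $C \in \BorelSets X \mprod \BorelSets Y$ (or more precisely in the $\sigma$-algebra $\BorelSets X \smprod \BorelSets Y$, which on compact metric spaces coincides with $\BorelSets{X\times Y}$) whose topological boundary $\boundary C$ satisfies $(P_1 \smprod P_2)(\boundary C) = 0$. First I would apply \cref{exhyper} to obtain a hyperfinite representation $(S, \{B(s)\}_{s\in S})$ of $X\times Y$ with the crucial feature that $B(s) \in \NSE{\BorelSets X}\mprod \NSE{\BorelSets Y}$ for every $s \in S$; this is what ties the hyperfinite combinatorics back to the \emph{product} algebra rather than the larger Borel algebra of $X\times Y$.

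Next I would build, from this representation, an internal set $E$ lying in $\NSE{\BorelSets X}\mprod \NSE{\BorelSets Y}$ that approximates $\NSE{C}$ in the sense that $\ST^{-1}(C)$ differs from $E$ by a $\Loeb{}$-null set. Concretely, let $E = \bigcup\{B(s) : s \in S,\ \ST(s) \in C\}$ (interpreting $\ST$ coordinatewise into $X\times Y$); this is a hyperfinite union of sets in $\NSE{\BorelSets X}\mprod\NSE{\BorelSets Y}$, hence itself lies in that algebra. Since each $B(s)$ has infinitesimal diameter, for any point $x$ with $\ST(x)$ in the interior of $C$ we have $x \in E$, and for $\ST(x)$ in the interior of the complement we have $x \notin E$; the only points where membership in $E$ can disagree with membership in $\ST^{-1}(C)$ are those $x$ with $\ST(x) \in \boundary C$. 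Thus the symmetric difference $E \symdef \ST^{-1}(C)$ is contained in $\ST^{-1}(\boundary C)$. Here I invoke \cref{bobrepresent}: since $(X\times Y, \BorelSets{X\times Y}, P_1\smprod P_2)$ is a Radon probability space (a Borel probability measure on a compact metric space), we get $(P_1\smprod P_2)(\boundary C) = \Loeb{\NSE{(P_1\smprod P_2)}}(\ST^{-1}(\boundary C)) = 0$, so $\ST^{-1}(\boundary C)$ is $\Loeb{}$-null, and therefore $E$ and $\ST^{-1}(C)$ agree modulo a $\Loeb{}$-null set.

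Now I would transfer this approximation down to the standard level. Let $\mu = \NSE{(P_1 \mprod P_2)}$ restricted to $\cF := \NSE{\BorelSets X}\mprod \NSE{\BorelSets Y}$; this is an internal (finitely additive, indeed $\NSE{}$-$\sigma$-additive) probability measure, and its Loeb measure $\Loeb{\mu}$ restricted to $\ST^{-1}$-preimages agrees with $P_1 \smprod P_2$ by \cref{bobrepresent} again (applied to $C$ itself and to approximating sets). Since $E \in \cF$ and $\Loeb{\mu}(E \symdef \ST^{-1}(C)) = 0$, by inner/outer regularity of the Loeb measure (\cref{amstextend}(3), or the standard Loeb construction) there exist, for each standard $\epsilon > 0$, internal sets $A_1^\epsilon \subset E \subset A_2^\epsilon$ in $\cF$ with $\mu(A_2^\epsilon \setminus A_1^\epsilon) < \epsilon$, and hence internal sets sandwiching $\ST^{-1}(C)$ up to $\Loeb{\mu}$-measure $\epsilon$. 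The final step is the descent: $C \subset X\times Y$ satisfies $\ST^{-1}(C)$ being approximable from inside and outside within $\cF$; pushing forward under $\ST$ (which sends internal sets in $\NSE{\BorelSets X}\mprod\NSE{\BorelSets Y}$ to sets in the product algebra $\BorelSets X \mprod \BorelSets Y$ up to null sets — this is the content of how \cref{bobrepresent} and the Loeb theory interact) yields sets $A_1, A_2 \in \BorelSets X \mprod \BorelSets Y$ with $A_1 \subset C \subset A_2$ and $(P_1\smprod P_2)(A_2 \setminus A_1) < \epsilon$, which is exactly the statement that $C \in (\BorelSets X \mprod \BorelSets Y)^c$ in the sense fixed just before the theorem.

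The main obstacle I anticipate is the descent step: producing genuine \emph{standard} product-algebra sets $A_1, A_2$ from the internal approximants, rather than merely internal sets in $\NSE{\BorelSets X}\mprod\NSE{\BorelSets Y}$. One clean way around this is to avoid going back and forth and instead argue directly with $\ST^{-1}$: one shows that a set $C$ lies in $(\BorelSets X \mprod \BorelSets Y)^c$ if and only if $\ST^{-1}(C)$ is $\Loeb{\mu}$-measurable where $\mu$ is the internal product measure on $\cF = \NSE{\BorelSets X}\mprod\NSE{\BorelSets Y}$ — this is essentially \cref{bobrepresent} combined with the Loeb-measurability criterion, and it is the analogue for the product algebra of Anderson's representation theorem. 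Granting that characterization, the whole proof reduces to showing $\ST^{-1}(C)$ is $\Loeb{\mu}$-measurable, which follows immediately once we exhibit $E \in \cF$ with $\Loeb{\mu}(E \symdef \ST^{-1}(C)) = 0$ as above. So the real work is (a) verifying \cref{exhyper} gives $B(s)$ in the product algebra — granted by hypothesis — and (b) the boundary estimate $E \symdef \ST^{-1}(C) \subset \ST^{-1}(\boundary C)$, which is where the infinitesimal-diameter property and the continuity-set hypothesis are both used.
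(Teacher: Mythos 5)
You correctly identify the descent step as the crux, but the ``clean way around'' you propose is not available: the claimed characterization --- that $C\in(\BorelSets X\mprod \BorelSets Y)^{c}$ if and only if $\ST^{-1}(C)$ is Loeb measurable with respect to the internal algebra $\NSE{\BorelSets X}\mprod\NSE{\BorelSets Y}$ --- is false in the direction you need, and this is refuted by \cref{stfthfalse} of this very paper. Indeed, the family $\{D\subset X\times Y:\ST^{-1}(D)\in\Loeb{(\NSE{\BorelSets X}\mprod\NSE{\BorelSets Y})}\}$ is a $\sigma$-algebra, and it contains every rectangle $A\times B$ (sandwich $\ST^{-1}(A)$ and $\ST^{-1}(B)$ separately by internal sets, using \cref{bobrepresent} in each factor, and take products); hence it contains all of $\BorelSets X\smprod\BorelSets Y$. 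If your equivalence held, every set in $\BorelSets X\smprod\BorelSets Y$ would lie in $(\BorelSets X\mprod\BorelSets Y)^{c}$, contradicting \cref{stfthfalse}. The same obstruction kills your first route: an internal set in $\NSE{\BorelSets X}\mprod\NSE{\BorelSets Y}$ sandwiching $\ST^{-1}(C)$ cannot in general be ``pushed forward under $\ST$'' into the \emph{finite} product algebra ($\ST$ of an internal set is merely closed in $X\times Y$, not a finite union of rectangles), and producing Loeb-measurability of $\ST^{-1}(C)$ is strictly weaker than membership in $(\BorelSets X\mprod\BorelSets Y)^{c}$. A secondary issue: your set $E=\bigcup\{B(s):\ST(s)\in C\}$ is indexed by an \emph{external} set (the condition $\ST(s)\in C$ is external), so $E$ need not be internal, let alone an element of $\NSE{\BorelSets X}\mprod\NSE{\BorelSets Y}$; this is patchable, but as written the construction does not produce a member of the internal product algebra.

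The paper avoids the descent entirely by running the argument as a proof by contradiction: assuming $E\notin(\BorelSets X\mprod\BorelSets Y)^{c}$, it \emph{transfers the negation}, obtaining that no internal sets $A,B\in\NSE{\BorelSets X}\mprod\NSE{\BorelSets Y}$ sandwich $\NSE{E}$ to within $\epsilon$, and then contradicts this at the internal level. The inner approximant is $\bigcup_{s\in S_C}B(s)$ with $S_C$ defined by an internal condition (so the internal definition principle applies), where $C\subset E$ is compact with $(P_1\smprod P_2)(E\setminus C)$ small; the continuity-set hypothesis enters exactly where you anticipated, to show the balls meeting $\NSE{C}$ but not contained in $\NSE{E}$ sit inside $\ST^{-1}(\boundary{E})$, which is Loeb null. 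The outer approximant is genuinely standard: a finite subcover of $\closure{E}$ by max-metric balls $U_x(n)$ (each a rectangle), chosen so that $(P_1\smprod P_2)(E_{n_0}\setminus E)$ is small, again using the continuity-set hypothesis to pass from $\closure{E}$ to $E$. You would need to restructure your argument along these lines --- transferring the failure of standard approximability rather than attempting to standardize internal approximants --- for the proof to go through.
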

\begin{proof} 
Suppose not. 
Then there exists a $(P_1\smprod P_2)$-continuity set $E\in \BorelSets X\smprod \BorelSets Y$ and $\epsilon>0$ such that 
\[
\neg(\exists A, B\in \BorelSets X\mprod \BorelSets Y)(A\subset E\subset B\wedge (P_1\mprod P_2)(B\setminus A)<\epsilon). 
\]
By the transfer principle, we have 
\[
\neg(\exists A, B\in \NSE{\BorelSets X}\mprod \NSE{\BorelSets Y})(A\subset \NSE{E}\subset B\wedge (\NSE{P_1}\mprod \NSE{P_2})(B\setminus A)<\epsilon). 
\]

By \cref{exhyper}, we can fix a hyperfinite representation $(S, \{B(s)\}_{s\in S})$ of $X\times Y$ 
such that $B(s)\in \NSE{\BorelSets X}\mprod \NSE{\BorelSets Y}$ for every $s\in S$. 
Let $d_1$ and $d_2$ denote the metric of $X$ and $Y$, respectively. 
Let $d$ be a metric on $X\times Y$ such that $d((x_1, x_2), (y_1, y_2))=\max\{d_1(x_1, y_1), d_2(x_2, y_2)\}$. 
For $x\in X\times Y$, let $U_{x}(n)=\{y\in X\times Y: d(x,y)<\frac{1}{n}\}$. 
Clearly, $U_{x}(n)\in \BorelSets X \mprod \BorelSets Y$. 
Let $\closure{E}$ denote the closure of $E$. 
For every $n\in \Nats$, as $\closure{E}$ is compact, there exists a finite subcover from $\{U_{x}(n): x\in E\}$ of $\closure{E}$. 
Let $E_n$ be the union of such a finite subcover. 
Then, for every $n$, $E_n$ is an element of $\BorelSets X \mprod \BorelSets Y$. 
Moreover, as $\bigcap_{n\in \Nats}E_n=\closure{E}$, there exists $n_0\in \Nats$ such that $(P_1\smprod P_2)(E_{n_0}\setminus \closure{E})<\frac{\epsilon}{100}$. As $E$ is a $(P_1\smprod P_2)$-continuity set, we know that $(P_1\smprod P_2)(E_{n_0}\setminus E)<\frac{\epsilon}{100}$. By the transfer principle, $\NSE{E}\subset \NSE{E_{n_0}}\in \NSE{\BorelSets X} \mprod \NSE{\BorelSets Y}$ and $(\NSE{P_1}\smprod \NSE{P_2})(\NSE{E_{n_0}}\setminus \NSE{E})<\frac{\epsilon}{100}$.

As $\cA_1\smprod \cA_2$ is Radon, there exists a compact set $C\subset E$ such that 
\[
(P_1\smprod P_2)(E\setminus C)<\frac{\epsilon}{100}. 
\]
Let $S_{C}=\{s\in S: (\exists x\in \NSE{C})(x\in B(s))\wedge (B(s)\subset \NSE{E})\}$. 
Clearly, $S_{C}$ is a subset of $\NSE{E}$.
By the internal definition principle, $S_{C}$ is hyperfinite. 
Thus, by \cref{exhyper}, $\bigcup_{s\in S_{C}}B(s)$ is an element of $\NSE{\BorelSets X}\mprod \NSE{\BorelSets Y}$. 

\begin{claim}\label{compactclaim}
\[
(\NSE{P_1}\mprod \NSE{P_2})(\bigcup_{s\in S_{C}}B(s))\approx (P_1\smprod P_2)(C).
\]
\end{claim}
\begin{proof}
Let $B_{C}=\{s\in S: (\exists x\in \NSE{C})(x\in B(s))\}$. 
Clearly, $\NSE{C}$ is a subset of $\bigcup_{s\in B_{C}}B(s)$.
As $C$ is compact, every element in $\NSE{C}$ is infinitely close to some element in $C$. 
Clearly, every element in $\bigcup_{s\in B_{C}}B(s)$ is infinitely close to some element in $C$. 
Thus, we have $\bigcup_{s\in B_{C}}B(s)\subset \ST^{-1}(C)$. 

By \cref{bobrepresent}, we have $(P_1\smprod P_2)(C)=\Loeb{(\NSE{P_1}\smprod \NSE{P_2})}(\ST^{-1}(C))$. 
Thus, we can conclude that $(P_1\smprod P_2)(C)\approx (\NSE{P_1}\mprod \NSE{P_2})(\bigcup_{s\in B_{C}}B(s))$. 
For every element $s$ in $B_{C}\setminus S_{C}$, as both $X$ and $Y$ are compact, $s$ is infinitely close to some element in $\boundary{E}$. 
Hence, we know that $\bigcup_{s\in B_{C}\setminus S_{C}}B(s)\subset \ST^{-1}(\boundary{E})$. 
As $E$ is a $(P_1\smprod P_2)$-continuity set, we know that $(P_1\smprod P_2)(\boundary{E})=0$. 
By \cref{bobrepresent}, we conclude that 
\[
(\NSE{P_1}\mprod \NSE{P_2})(\bigcup_{s\in B_{C}\setminus S_{C}}B(s))\lessapprox \Loeb{(\NSE{P_1}\smprod \NSE{P_2})}(\ST^{-1}(\boundary{E}))=0. 
\]
Hence, we conclude that $(\NSE{P_1}\mprod \NSE{P_2})(\bigcup_{s\in S_{C}}B(s))\approx (P_1\smprod P_2)(C)$.
\end{proof}


Note that both $\bigcup_{s\in S_{C}}B(s)$ and $\NSE{E_{n_0}}$ 
are elements of $\NSE{\BorelSets X}\mprod \NSE{\BorelSets Y}$.
Moreover, we have $\bigcup_{s\in S_{C}}B(s)\subset \NSE{E}\subset \NSE{E_{n_0}}$ such that 
\[
&(\NSE{P_1}\mprod \NSE{P_2})(\NSE{E_{n_0}}\setminus \bigcup_{s\in S_{C}}B(s))\\
&=(\NSE{P_1}\smprod \NSE{P_2})(\NSE{E_{n_0}}\setminus \bigcup_{s\in S_{C}}B(s))\\
&=(\NSE{P_1}\smprod \NSE{P_2})(\NSE{E_{n_0}}\setminus \NSE{E})+(\NSE{P_1}\smprod \NSE{P_2})(\NSE{E}\setminus \bigcup_{s\in S_{C}}B(s))<\epsilon
\]
This is a contradiction, hence $E$ is an element of $(\BorelSets X\mprod \BorelSets Y)^{c}$, completing the proof. 
\end{proof} 

\cref{falgeeq} also provides a partial answer to \cref{fthsun}. 

\begin{theorem}\label{nsfour}
Let $\cA_1=(X, \BorelSets X, P_1)$ and $\cA_2=(Y, \BorelSets Y, P_2)$ 
be two Borel probability spaces such that both $X$ and $Y$ are compact metric spaces. 
Then every $\NSE{P_1}\smprod \NSE{P_2}$-continuity set is an element of $\Loeb{(\NSE{\BorelSets X}\mprod \NSE{\BorelSets Y})}$.
\end{theorem}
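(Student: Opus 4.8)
The plan is to transfer the statement of \cref{falgeeq} into the nonstandard world and combine it with the representation result \cref{bobrepresent}. Recall that \cref{falgeeq} asserts that every $P_1 \smprod P_2$-continuity set $E$ lies in the completion $(\BorelSets X \mprod \BorelSets Y)^c$; what \cref{nsfour} asks is the nonstandard analogue, namely that $\NSE E$ — or more precisely, any internal set which is the nonstandard extension of a $\NSE{P_1}\smprod \NSE{P_2}$-continuity set — lies in the Loeb algebra $\Loeb{(\NSE{\BorelSets X}\mprod \NSE{\BorelSets Y})}$. So first I would fix a $\NSE{P_1}\smprod \NSE{P_2}$-continuity set $E$. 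Since $E$ is in the $\NSE\sigma$-algebra $\NSE{\BorelSets X}\smprod \NSE{\BorelSets Y}$, it is the nonstandard extension of (or behaves like) a set handled by the $\NSE{}$-transfer of \cref{falgeeq}: concretely, applying transfer to \cref{falgeeq} gives that for every $\NSE{P_1}\smprod \NSE{P_2}$-continuity set $E$ and every $\epsilon > 0$ (including infinitesimals, via underspill, or just a fixed standard $\epsilon > 0$), there exist $A, B \in \NSE{\BorelSets X}\mprod \NSE{\BorelSets Y}$ with $A \subset E \subset B$ and $(\NSE{P_1}\mprod \NSE{P_2})(B \setminus A) < \epsilon$.

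Second, I would observe that $\NSE{P_1}\mprod \NSE{P_2}$ and $\NSE{P_1}\smprod \NSE{P_2}$ agree on the internal algebra $\NSE{\BorelSets X}\mprod \NSE{\BorelSets Y}$ — this is essentially how $\smprod$ is defined in the excerpt (the $\NSE\sigma$-additive measure induced by the internal product). Hence $(\NSE{P_1}\mprod\NSE{P_2})(B\setminus A) = (\NSE{P_1}\smprod\NSE{P_2})(B\setminus A) < \epsilon$. Third, I would invoke the defining property of the Loeb algebra: a set $E \subset \NSE X \times \NSE Y$ belongs to $\Loeb{(\NSE{\BorelSets X}\mprod\NSE{\BorelSets Y})}$ precisely when
\[
\sup\{\ST(\mu(A)) : A \subset E,\ A \in \NSE{\BorelSets X}\mprod\NSE{\BorelSets Y}\} = \inf\{\ST(\mu(C)) : E \subset C,\ C \in \NSE{\BorelSets X}\mprod\NSE{\BorelSets Y}\},
\]
where $\mu = \NSE{P_1}\mprod\NSE{P_2}$. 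The inner approximants $A$ and outer approximants $B$ produced in the first step, with $\mu(B\setminus A) < \epsilon$ for every standard $\epsilon > 0$, force the two quantities above to coincide, so $E \in \Loeb{(\NSE{\BorelSets X}\mprod\NSE{\BorelSets Y})}$, which is the conclusion.

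I do not expect a serious obstacle here; the theorem is really a corollary of \cref{falgeeq} plus transfer. The one point requiring a little care is making sure the transferred version of \cref{falgeeq} applies to the right class of sets: \cref{falgeeq} is stated for continuity sets of $P_1\smprod P_2$ over standard compact metric spaces, and one must check that its statement, being first-order over the relevant structures, transfers to yield the claim for $\NSE{P_1}\smprod\NSE{P_2}$-continuity sets; alternatively one re-runs the proof of \cref{falgeeq} internally. The other mild subtlety is confirming that "$\epsilon$-approximation in the internal algebra for all standard $\epsilon$" is exactly the membership criterion for the Loeb algebra — but this is immediate from the $\sup$-$\inf$ definition recalled at the start of the paper, since $\Loeb\mu(B\setminus A) = \ST(\mu(B\setminus A)) \le \ST(\epsilon)$ can be made arbitrarily small. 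Hence the proof is short and the risk of a gap is low.
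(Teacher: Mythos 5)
Your proposal is correct and follows essentially the same route as the paper: fix a standard $\epsilon>0$, apply the transfer of \cref{falgeeq} to obtain internal $A,B\in\NSE{\BorelSets X}\mprod\NSE{\BorelSets Y}$ with $A\subset E\subset B$ and $(\NSE{P_1}\mprod\NSE{P_2})(B\setminus A)<\epsilon$, and conclude from the $\sup$-$\inf$ characterization of the Loeb algebra. The extra remarks you add (agreement of $\mprod$ and $\smprod$ on the internal algebra, the membership criterion for $\Loeb{(\NSE{\BorelSets X}\mprod\NSE{\BorelSets Y})}$) are correct elaborations of steps the paper leaves implicit.
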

\begin{proof} 
Pick a standard $\epsilon>0$ and let $E$ be a $\NSE{P_1}\smprod \NSE{P_2}$-continuity set. 
By the transfer of \cref{falgeeq}, there exist $A, B\in (\NSE{\BorelSets X}\mprod \NSE{\BorelSets Y})$ such that $A\subset E\subset B$ and $(\NSE{P_1}\mprod \NSE{P_2})(B\setminus A)<\epsilon$. 
Thus, $E$ is an element of $\Loeb{(\NSE{\BorelSets X}\mprod \NSE{\BorelSets Y})}$.
\end{proof}

However, \cref{stfthsun} is generally false. 
We first introduce the following definition:

\begin{definition}\label{defiso}
For any given Borel measure spaces $(X, \BorelSets X, P_1)$ and $(Y, \BorelSets Y, P_2)$, 
a measurable, measure preserving bijection $F: X\to Y$ is called an isomorphism if
\begin{enumerate}
\item The set of discontinuity points of $F$ is contained in some $B_1\in \BorelSets X$ with $P_1(B_1)=0$;
\item The set of discontinuity points of $F^{-1}$ is contained in some $B_2\in \BorelSets Y$ with $P_2(B_2)=0$.
\end{enumerate}
\end{definition}

By Theorem 1 of \citep{sun1996}, every Polish space endowed with the Borel $\sigma$-algebra and an atomless probability measure is Borel isomorphic to the unit interval with the Lebesgue measure.
We now prove the following theorem:

\begin{theorem}[Communicated by David Fremlin]\label{stfthfalse}
Let $X$ and $Y$ be two Polish spaces endowed with Borel $\sigma$-algebras $\BorelSets X$ and $\BorelSets Y$, two atomless probability measures $\mu$ and $\nu$, respectively.  
Then there exists a set $E\in \BorelSets X \smprod \BorelSets Y$ such that $E$ is not an element of $(\BorelSets X\mprod \BorelSets Y)^{c}$.
\end{theorem}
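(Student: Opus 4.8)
The plan is to reduce the general Polish-space statement to the concrete case $X=Y=[0,1]$ with Lebesgue measure, which is exactly the situation handled by the preceding example (the diagonal $D$). Since $\mu$ and $\nu$ are atomless Borel probability measures on Polish spaces, Theorem~1 of \citep{sun1996} gives isomorphisms $G\colon X\to[0,1]$ and $H\colon Y\to[0,1]$ in the sense of \cref{defiso}: measurable, measure-preserving bijections whose discontinuity sets (and those of their inverses) are contained in Borel null sets. I would then push the bad set forward: starting from the diagonal $D\subset[0,1]\times[0,1]$, set $E=(G\times H)^{-1}(D)=\{(x,y): G(x)=H(y)\}$. The first step is to check $E\in\BorelSets X\smprod\BorelSets Y$ and $(\mu\smprod\nu)(E)=0$; this follows because $G\times H$ is a measurable, measure-preserving bijection for the $\sigma$-products (product of measurable maps is measurable for the product $\sigma$-algebra, and it preserves the product measure by Fubini/uniqueness of product measures), so the preimage of the $\smprod$-null Borel set $D$ is $\smprod$-null Borel.

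The second and main step is to show $E\notin(\BorelSets X\mprod\BorelSets Y)^{c}$, i.e.\ there is an $\epsilon>0$ admitting no $A,B\in\BorelSets X\mprod\BorelSets Y$ with $A\subset E\subset B$ and $(\mu\mprod\nu)(B\setminus A)<\epsilon$. The natural route is to transport such a hypothetical sandwich back to $[0,1]^2$ using $G\times H$. The subtlety is that $G\times H$ need not map the \emph{algebra} $\BorelSets X\mprod\BorelSets Y$ into $\BorelSets{[0,1]}\mprod\BorelSets{[0,1]}$ — a finite union of measurable rectangles in $X\times Y$ maps to a finite union of sets of the form $G(A_i)\times H(B_i)$, and $G(A_i)$, $H(B_i)$ are merely Borel (after discarding null sets where $G,G^{-1}$ misbehave), not necessarily of a special form, but that is fine: they are genuine measurable rectangles, so the image \emph{is} again in $\BorelSets{[0,1]}\mprod\BorelSets{[0,1]}$ up to null sets. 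Concretely, if $A\subset E\subset B$ with $A=\bigcup_{i=1}^n (A_i\times A_i')$ and $B=\bigcup_{i=1}^m(B_i\times B_i')$ are in the algebra, then modulo a $(\mu\mprod\nu)$-null set we may assume $G,H$ restricted to the relevant sets are bijections onto their Borel images, and $(G\times H)(A)$, $(G\times H)(B)$ lie in $\BorelSets{[0,1]}\mprod\BorelSets{[0,1]}$ with $(G\times H)(A)\subset D\subset (G\times H)(B)$ (up to null) and $(\mu_L\mprod\mu_L)$ of the gap still $<\epsilon$. This contradicts the example's conclusion that $D$ is in no null set of $\cM_0\mprod\cM_0$ — more precisely, one runs the example's argument directly: $D\subset\bigcup(B_i\times B_i')$ forces $\bigcup(B_i\cap B_i')=[0,1]$, while approximate smallness forces each $\mu_L(B_i\cap B_i')$ small, contradiction. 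So I would actually avoid talking about inner approximations $A$ at all and just use the outer set $B$ and rerun the example's covering/diagonal argument.

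The step I expect to be the genuine obstacle is the bookkeeping around the discontinuity/null sets of the isomorphisms: $G$ is only a \emph{bijection} that is measure preserving and ``almost continuous,'' so images of Borel sets under $G$ are Borel only after removing a fixed Borel null set $N_X\supset\{$discontinuities of $G$ and of $G^{-1}\}$ (using that a continuous injection on a Borel set has Borel image, by the Lusin–Souslin theorem), and similarly for $H$. I would set up once and for all Borel null sets $N_X\subset X$, $N_Y\subset Y$ outside of which $G$ and $H$ are homeomorphisms onto their (Borel) images, note that $D':=D\setminus(G\times H)(N_X\times Y\cup X\times N_Y)$ is still a diagonal-type set carrying the same contradiction (it is $[0,1]^2$-a.e.\ equal to $D$ and projects onto a co-null subset of the diagonal, which is all the example's argument needs), and work with $E':=(G\times H)^{-1}(D')$. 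The only other mild care is verifying $G\times H$ preserves the $\sigma$-product measure and is $\smprod$-bimeasurable, which is standard. With these in hand the contradiction is immediate and the theorem follows; together with \cref{falgeeq} this pins down exactly that $\smprod$-continuity sets, but not all $\smprod$-measurable sets, survive into the product completion, giving the promised negative answer to \cref{stfthsun} and hence to \cref{fthsun}.
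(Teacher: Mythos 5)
There is a genuine gap here, and it is fatal to the whole strategy: the diagonal is the wrong set to transport. Under the definition of completion in force in this section (the sandwich definition fixed in the paragraph before \cref{falgeeq}), the diagonal $D$ \emph{does} belong to $(\cM_0\mprod\cM_0)^{c}$: the example itself ends by observing that for every $\epsilon>0$ there is an $F\in\cM_0\mprod\cM_0$ with $D\subset F$ and $(\mu\mprod\mu)(F)<\epsilon$ (cover $D$ by the $n$ squares $[k/n,(k+1)/n]^{2}$, whose total product measure is $1/n$), so $\emptyset\subset D\subset F$ is an admissible sandwich. What the example establishes is only that $D$ fails the \emph{first} definition of completion (containment in a null set of the algebra), which is precisely why the paper must single out the second definition for this section. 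Consequently your $E=(G\times H)^{-1}(D)$, being a $(\mu\smprod\nu)$-null set coverable by algebra sets of arbitrarily small measure, lies in $(\BorelSets X\mprod\BorelSets Y)^{c}$ and proves nothing. Your attempted ``approximate'' rerun of the diagonal argument also breaks at the decisive step: from $(\mu\mprod\mu)(B_i\times B_i')=\mu(B_i)\mu(B_i')$ being small you cannot conclude that $\mu(B_i\cap B_i')$ is small (take $B_i=B_i'=[k/n,(k+1)/n]$). The exact argument in the example works only because each rectangle there is genuinely null.

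The paper's proof goes in the opposite direction: it produces a set of \emph{positive} $\sigma$-product measure containing no rectangle of positive measure, namely $E=(X\times Y)\setminus\bigcup_{n}(E_n\times F_n)$ for independent sequences with $\mu(E_n)=\nu(F_n)=\tfrac{1}{n+1}$ (the Borel isomorphism with $[0,1]$ from \citep{sun1996} is used only to manufacture these sequences). Because the harmonic series diverges, any rectangle $A\times B$ of positive measure must meet some $E_m\times F_m$ in positive measure, so every algebra set contained in $E$ is null, while every algebra set containing $E$ has measure at least $(\mu\smprod\nu)(E)>0$; hence no sandwich of width below $(\mu\smprod\nu)(E)$ exists. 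If you want to salvage a transport-based argument, you would need to pull back a set of this kind; a measure-zero obstruction such as the diagonal is invisible to the sandwich completion.
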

\begin{proof}
By Theorem 1 of \citep{sun1996}, $(X, \BorelSets X, \mu)$ and $(Y, \BorelSets Y, \nu)$ are Borel isomorphic to the unit interval with the Lebesgue measure. Then there exist 
\begin{enumerate}
\item An independent sequence $\{E_n: n\in \Nats\}\subset \BorelSets X$ such that $\mu(E_n)=\frac{1}{n+1}$ for all $n\in \Nats$;
\item An independent sequence $\{F_n: n\in \Nats\}\subset \BorelSets Y$ such that $\mu(F_n)=\frac{1}{n+1}$ for all $n\in \Nats$.
\end{enumerate}
Let $E=(X\times Y)\setminus (\bigcup_{n\in \Nats}E_n\times F_n)=\bigcap_{n\in \Nats}\big((X\times Y)\setminus (E_n\times F_n)\big)$. Thus, $E$ is an element of $\BorelSets X\smprod \BorelSets Y$ and $(\mu \smprod \nu)(E)\geq \prod_{n\in \Nats}(1-\frac{1}{n+1})^{2}>0$. To finish the proof, it is sufficient to show that $E$ does not contain any set of the form $A\times B$ with $A\in \BorelSets X$, $B\in \BorelSets Y$ and $(\mu \mprod \nu)(A\times B)>0$. 

Let $A\in \BorelSets X$ and $B\in \BorelSets Y$ be such that $\mu(A)>0$ and $\nu(B)>0$. 
Let $I=\{n\in \Nats: \mu(A\cap E_n)=0\}$ and $J=\{n\in \Nats: \nu(B\cap F_n)=0\}$. 
For any $i\in I$, we have $\mu(X\setminus E_{i})>\mu(A)>0$. 
Hence, we have $\prod_{n\in I}\mu(X\setminus E_{n})=\prod_{n\in I}(1-\frac{1}{n+1})>0$. 
Similarly, we have $\prod_{n\in J}\nu(X\setminus F_{n})=\prod_{n\in J}(1-\frac{1}{n+1})>0$. 
So, there exists some $m\in \Nats$ such that $m\not\in I\cup J$, which futher implies that 
$(\mu \mprod \nu)\big((A\times B)\cap (E_{m}\times F_{m})\big)>0$. 
This means that $A\times B$ can not be contained in $E$ so $E$ is not an element of $(\BorelSets X\mprod \BorelSets Y)^{c}$.
\end{proof}

By \cref{stfthfalse}, we conclude that the answers to \cref{fthsun} and \cref{stfthsun} are generally negative.

\printbibliography

\end{document}